\newtheorem{theo}{Theorem}[section]
\newtheorem{lemm}[theo]{Lemma}
\newtheorem{rema}[theo]{Remark}
\numberwithin{equation}{section}
\begin{document}

\title[stability on an random source problem]{Stability on the Inverse Random
Source Scattering Problem for the One-Dimensional Helmholtz Equation}

\author{Peijun Li}
\address{Department of Mathematics, Purdue University, West Lafayette, Indiana
47907, USA.}
\email{lipeijun@math.purdue.edu}

\author{Ganghua Yuan}
\address{KLAS, School of Mathematics and Statistics, Northeast Normal University,
Changchun, Jilin, 130024, China}
\email{yuangh925@nenu.edu.cn}

\thanks{The research of PL was supported in part by the NSF grant DMS-1151308.
The research of GY was supported in part by NSFC grants 10801030, 11271065,
11571064, the Ying Dong Fok Education Foundation under grant 141001, and the
Fundamental Research Funds for the Central Universities under grant
2412015BJ011.}

\keywords{stability, inverse source problem, Helmholtz equation,
stochastic differential equation}

\begin{abstract}
Consider the one-dimensional stochastic Helmholtz equation where the source is
assumed to be driven by the white noise. This paper concerns the stability
analysis of the inverse random source problem which is to reconstruct the
statistical properties of the source such as the mean and variance. Our results
show that increasing stability can be obtained for the inverse problem by using
suitable boundary data with multi-frequencies.
\end{abstract}

\maketitle

\section{Introduction and problem statement}

Consider the one-dimensional stochastic Helmholtz equation
\begin{equation}\label{she}
 u''(x, \kappa)+\kappa^2 u(x, \kappa)=f(x) +\sigma(x)\dot{W}_x,
\end{equation}
where $\kappa>0$ is the wavenumber, $f$ and $\sigma$ are deterministic
functions which have compact supports contained in the interval $[0,\,1]$, $W_x$
is the spatial Brownian motion and $\dot{W}_x$ is the white noise. In this
model, $f, \sigma$, and $\sigma^2$ can be viewed as the mean, the standard
deviation, and the variance of the random source, respectively. The radiated
random wave field $u$ is required to satisfy the outgoing wave conditions:
\begin{equation}\label{owc}
 u'(0, \kappa)+{\rm i}\kappa u(0, \kappa)=0,\quad u'(1, \kappa)-{\rm i}\kappa
u(1, \kappa)=0.
\end{equation}

Given $f$ and $\sigma$, the direct source scattering problem is to determine the
radiated wave field $u$. It is shown in \cite{BCLZ-MC14} that
\eqref{she}--\eqref{owc} has a unique pathwise solution which is
\begin{equation}\label{sol}
 u(x, \kappa)=\int_0^1 \frac{e^{{\rm i}\kappa |x-y|}}{2{\rm i}\kappa}f(y){\rm
d}y+\int_0^1  \frac{e^{{\rm i}\kappa |x-y|}}{2{\rm i}\kappa}\sigma(y){\rm d}W_y.
\end{equation}
Here the second integral at the right hand side of \eqref{sol} is understood in
the sense of It\^{o}. This paper concerns the inverse source scattering
problem, which is to determine $f$ and $g=\sigma^2$ from boundary measurement of
the radiated wave field $u$. Specifically, we propose the following two inverse
problems:

\begin{enumerate}

\item If $f$ and $g$ are complex function, the inverse problem is to determine
$f$ and $g$ simultaneously by two-sided observation data $u(0, \kappa)$
and $u(1, \kappa), \kappa\in(0, K)$, where $K>1$ is a constant.

\item If $f$ is a real function, the inverse problem is to determine $f$
by one-sided observation data $u(0, \kappa), \kappa\in (0, 1)\cup\cup_{j=1}^N
j\pi $, where $N\in\mathbb{N}$.

\end{enumerate}

The inverse source problem has significant applications in medical and
biomedical imaging \cite{I-89}. Although the deterministic inverse source
problem has been well studied \cite{BLLT-IP15, BLRX-SJNA15}, little is known for
the stochastic case \cite{D-JMP79}. We refer to \cite{BCLZ-MC14, L-IP11} for
numerical solution of the one-dimensional inverse random source scattering
problem. A related inverse random source problem can be found in \cite{BX-IP13}.
However, there are no stability results available for the inverse random source
scattering problem at present.

In this paper, we study stability of the above two inverse problems. As is
known, the inverse source problem does not have a unique solution at a single
frequency even for its deterministic counterpart
\cite{DS-IEEE82, HKP-IP05}. Our goal is to establish increasing stability of the
inverse problems with multi-frequencies. We refer to \cite{BLT-JDE10, CIL} for
increasing stability of the deterministic inverse source problem. In \cite{CIL},
the authors discussed stability of the inverse source problem for the
three-dimensional Helmholtz equation by using the Huygens principle. In
\cite{BLT-JDE10}, the authors studied the stability of the two- and
three-dimensional Helmholtz equations via Green's functions. Related results
can be found in \cite{I-CM07, I-D11} on increasing stability of determining
potentials and in the continuation for the Helmholtz equation.

\section{Main results}

Let the triple $(\Omega, \mathcal{F}, P)$ be a complete probability space on
which the one-dimensional Brownian motion $\{W_x\}_{x\in [0, 1]}$ is defined. If
$X$ is a random variable, ${\bf E}(X)$ and ${\bf V}(X)={\bf
E}(X-{\bf E}(X))^2$ are the expectation and variance of $X$, respectively. We
remark that ${\bf V}(X)$ is not an ordinary variance if $X$ is a complex-valued
random variable. For convenience, we still call ${\bf V}(X)$ the variance of
random variable $X$ even if it is complex-valued. We refer to \cite{F-06} for
more details on notation of stochastic differential equations.

Define a complex-valued functional space:
\[
 \mathcal{C}_M =\{f\in H^n(0, 1): \|f\|_{H^n(0, 1)}\leq M, ~{\rm
supp}f\subseteq(0, 1), ~f: (0, 1)\to\mathbb{C}\}
\]
and a real-valued functional space:
\[
 \mathcal{R}_M =\{f\in H^n(0, 1): \|f\|_{H^n(0, 1)}\leq M, ~{\rm
supp}f\subseteq(0, 1), ~f: (0, 1)\to\mathbb{R}\},
\]
where $n\in\mathbb{N}$ and $M>1$ is a constant. Given two random functions $u_1$
and $u_2$, we define the function of expectation discrepancy:
\[
 v(x, \kappa)={\bf E}u_1(x, \kappa)-{\bf E}u_2(x, \kappa)
\]
and the function of the variance discrepancy:
\[
 w(x, \kappa)={\bf V}u_1(x, \kappa)-{\bf V}u_2(x, \kappa).
\]

Now we show the main stability result of the first inverse problem.

\begin{theo}\label{mr1}
 Let $f_j, g_j\in\mathcal{C}_M, j=1, 2$, and let $u_j$ be the solution
\eqref{sol} corresponding to $f_j, g_j$. Then there exist two
positive constants $C_1, C_2$ independent of $n, K, M, \kappa$ such that
\begin{align}
\label{cfe} \| f_1-f_2\|^2_{L^2(0, 1)}\leq C_1
\left(\epsilon^2_1+\frac{M^2}{\left(\frac{K^{\frac{2}{3}}|\ln
\epsilon_1|^{\frac{1}{4}}}{(6n-3)^3}\right)^{ 2n-1}} \right),\\
\label{cge}\| g_1-g_2\|^2_{L^2(0, 1)}\leq C_2
\left(\epsilon^2_2+\frac{M^2}{\left(\frac{K^{\frac{2}{3}}|\ln
\epsilon_2|^{\frac{1}{4}}}{(6n-3)^3}\right)^{ 2n-1}}\right),
\end{align}
where $K>1$ and
\begin{align}
\label{e1} \epsilon_1&=\left(4\int_0^K \kappa^2\left( |v(0, \kappa)|^2 +
|v(1, \kappa)|^2 \right){\rm d}\kappa\right)^{\frac{1}{2}},\\
\label{e2}\epsilon_2&=\left(16\int_0^K \kappa^4\left( |w(0, \kappa)|^2 +
|w(1, \kappa)|^2 \right){\rm d}\kappa\right)^{\frac{1}{2}}.
\end{align}
\end{theo}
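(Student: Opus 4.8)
The plan is to convert everything into statements about the Fourier transforms of $f=f_1-f_2$ and $g=g_1-g_2$, and then to run a quantitative analytic-continuation (increasing stability) argument. First I would compute the two boundary functionals from \eqref{sol}. Because the It\^o integral has zero expectation, $\mathbf E u(x,\kappa)=\int_0^1\frac{e^{\mathrm i\kappa|x-y|}}{2\mathrm i\kappa}f(y)\,\mathrm dy$, so $v(x,\kappa)$ is this expression with $f=f_1-f_2$. Evaluating at $x=0$ and $x=1$ and using $\mathrm{supp}\,f\subseteq(0,1)$ gives $2\mathrm i\kappa\,v(0,\kappa)=\hat f(-\kappa)$ and $2\mathrm i\kappa\,v(1,\kappa)=e^{\mathrm i\kappa}\hat f(\kappa)$, where $\hat f(\xi)=\int_0^1 f(y)e^{-\mathrm i\xi y}\,\mathrm dy$; hence by \eqref{e1} one gets exactly $\epsilon_1^2=\int_{-K}^{K}|\hat f(\xi)|^2\,\mathrm d\xi$. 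For the variance I would use the definition $\mathbf V(X)=\mathbf E(X-\mathbf EX)^2$ (no conjugate), so the It\^o isometry produces $\int h^2$ rather than $\int|h|^2$; with $h(y)=\frac{e^{\mathrm i\kappa|x-y|}}{2\mathrm i\kappa}\sigma(y)$ this yields $\mathbf V u(x,\kappa)=-\frac1{4\kappa^2}\int_0^1 e^{2\mathrm i\kappa|x-y|}g(y)\,\mathrm dy$ with $g=\sigma^2$. The doubled phase is the reason the relevant transform is sampled at $2\kappa$: one finds $-4\kappa^2 w(0,\kappa)=\hat g(-2\kappa)$ and $-4\kappa^2 w(1,\kappa)=e^{2\mathrm i\kappa}\hat g(2\kappa)$, and after the substitution $\eta=2\kappa$ formula \eqref{e2} becomes $\epsilon_2^2=\tfrac12\int_{-2K}^{2K}|\hat g(\eta)|^2\,\mathrm d\eta$.

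This reduces both inequalities to a single Fourier-analytic statement: bound $\|f\|_{L^2}^2=\frac1{2\pi}\int_{\mathbb R}|\hat f|^2$ knowing only that $\int_{-K}^K|\hat f|^2$ has size $\epsilon_1^2$, together with the a priori bound $\|f\|_{H^n}\le 2M$. I would introduce an intermediate radius $s>K$ and split $\int_{\mathbb R}|\hat f|^2=\int_{-s}^{s}+\int_{|\xi|>s}$. The outer piece is controlled by the Sobolev bound alone, $\int_{|\xi|>s}|\hat f|^2\le (1+s^2)^{-n}\|f\|_{H^n}^2\lesssim M^2 s^{-2n}$; the genuine difficulty is the part of $\int_{-s}^s$ lying outside the data interval $[-K,K]$, which is accessible only through the a priori information.

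For that middle piece I would exploit that $f$ has compact support, so $\hat f$ extends to an entire function of exponential type $\le 1$ with $|\hat f(z)|\le \|f\|_{L^1}e^{|\mathrm{Im}\,z|}\le CMe^{|\mathrm{Im}\,z|}$ (here $\|f\|_{L^1}\le\|f\|_{L^2}\le 2M$ on the unit interval). Thus $\hat f$ is small in $L^2$ on $[-K,K]$ and has a controlled global modulus, and a two-constants (Hadamard/harmonic-measure) estimate propagates the smallness to the larger interval, giving a H\"older interpolation bound of the form $M^{1-\mu}\epsilon_1^{\mu}$ on $[-s,s]$, where the exponent $\mu=\mu(K,s)$ is the harmonic measure of the data segment and degrades as $s$ grows. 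The hard part of the proof is exactly this continuation together with the subsequent optimization: substituting the continuation bound into $\int_{-s}^s|\hat f|^2$ produces a term that grows with $s$ (through the length $s$ and through the shrinking of $\mu$) and must be balanced against the decreasing a priori tail. Choosing $s$ to equalize these contributions is what produces the explicit quantity $s\sim K^{2/3}|\ln\epsilon_1|^{1/4}/(6n-3)^3$; the binding term after balancing is the analytically-continued middle piece, of order $M^2 s^{-(2n-1)}$, which dominates the cruder tail rate $s^{-2n}$ and accounts for the exponent $2n-1$ in \eqref{cfe}. The fractional powers $\tfrac23,\tfrac14$ and the factor $(6n-3)^3$ are artifacts of this balancing and of the harmonic-measure computation for the given geometry.

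Finally, the variance estimate \eqref{cge} follows by repeating the argument verbatim with $f$ replaced by $g$, $\epsilon_1$ by $\epsilon_2$, and the data interval $[-K,K]$ by $[-2K,2K]$, so no new ideas are needed there. I expect the stochastic input, namely the computation of $\mathbf E u$ and $\mathbf V u$ via the It\^o isometry, to be routine once the absence of the conjugate in the definition of $\mathbf V$ is noted; the main obstacle is the quantitative analytic continuation from $[-K,K]$ to $[-s,s]$ and the optimization of $s$ that converts the raw H\"older estimate into the displayed increasing-stability bounds.
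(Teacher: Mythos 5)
Your proposal follows essentially the same route as the paper: reduce the boundary data via $\mathbf{E}u$ and the conjugate-free It\^o isometry to the Fourier transforms of $f_1-f_2$ and $g_1-g_2$ sampled at $\pm\kappa$ and $\pm 2\kappa$, split frequencies at a radius $s$, control the tail by the $H^n$ bound, and recover the middle band by harmonic-measure analytic continuation from the data interval, optimizing $s\sim K^{2/3}|\ln\epsilon_1|^{1/4}$ and converting exponential to polynomial decay via $e^{-x}\leq (6n-3)!/x^{3(2n-1)}$ (which is where the factor $(6n-3)^3$ actually enters, not in the choice of $s$). The only technical difference is that the paper applies the Cheng--Isakov--Lu sector lemma to the entire functions $I_j(s)e^{-cs}$, where $I_j(s)$ is the integrated squared data up to frequency $s$ --- precisely the device that turns your $L^2$ smallness on $[-K,K]$ into the pointwise smallness a two-constants argument needs, a step your sketch of continuing $\hat f$ pointwise glosses over but which is repaired by exactly this substitution.
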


\begin{rema}
There are two parts in the stability estimates \eqref{cfe} and \eqref{cge}: the
first part is the data discrepancy and the second part comes from the high
frequency tails of the functions. It is clear to see that the stability
increases as $K$ increases, i.e., the problem is more stable as more
frequencies data are used. We can also see that when
$n<\left[\frac{K^{\frac{2}{9}}|\ln\epsilon_j|^{\frac{1}{12}}+3}{6}\right]$ the the
stability increases as $n$ increases, i.e., the problem is more
stable as the functions have suitably higher regularity.
\end{rema}

Here is the main stability result of the second inverse problem.
\begin{theo}\label{mr2}
 Let $f_j\in\mathcal{R}_M, j=1, 2,$ and let $u_j$ be the solution of
\eqref{sol} corresponding to $f_j$. Let
\[
\epsilon_3=\left(\sum_{j=1}^N (2j\pi)^2
|{\rm Re}v(0, j\pi)|^2\right)^{\frac{1}{2}},\quad
\epsilon_4=\sup_{\kappa\in (0, 1)}2\kappa|{\rm Re}v(0, \kappa)|<1.
\]
Then there exists a positive constant $C_3$ independent of $n,
N, M, \kappa$ such that
\[
 \|f_1-f_2\|^2_{L^2(0, 1)}\leq C_3 \left(\epsilon^2_3+\frac{M^2}{\left(\frac{
N^{\frac{5}{8}}|\ln\epsilon_4|^{\frac{1}{9}}}{(6n-3)^3} \right)^{2n-1}} \right).
\]
\end{theo}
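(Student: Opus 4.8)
The plan is to reduce the statement to a one-sided stability estimate for the sine transform of $f:=f_1-f_2\in\mathcal{R}_{2M}$ and then to control the high-frequency tail by analytic continuation. First I would use that the It\^o integral has zero expectation, so ${\bf E}u_j(0,\kappa)=\int_0^1\frac{e^{{\rm i}\kappa y}}{2{\rm i}\kappa}f_j(y)\,{\rm d}y$; writing $S(\kappa):=\int_0^1\sin(\kappa y)f(y)\,{\rm d}y$, a direct computation gives $2\kappa\,{\rm Re}\,v(0,\kappa)=S(\kappa)$. Hence $\epsilon_4=\sup_{(0,1)}|S|$ and $\epsilon_3^2=\sum_{j=1}^N S(j\pi)^2$. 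Since $f$ is real with ${\rm supp}\,f\subseteq(0,1)$, its odd extension has Fourier transform $-2{\rm i}S$, so Plancherel — equivalently the completeness of $\{\sin(j\pi y)\}$ on $(0,1)$ — yields the exact identity $\|f\|_{L^2(0,1)}^2=\frac{2}{\pi}\int_0^\infty S(\kappa)^2\,{\rm d}\kappa=2\sum_{j=1}^\infty S(j\pi)^2$. This representation is what I would work from.

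Next I would split the series at the index $N$. The low part $2\sum_{j=1}^N S(j\pi)^2=2\epsilon_3^2$ is exactly the data-discrepancy term, so no work is needed there, and everything hinges on the tail $2\sum_{j>N}S(j\pi)^2$. The key structural fact is that, because ${\rm supp}\,f\subseteq(0,1)$, the function $S$ extends to an entire function of exponential type at most one obeying the Paley--Wiener bound $|S(\kappa+{\rm i}\tau)|\le\|f\|_{L^1}e^{|\tau|}\le CMe^{|\tau|}$. I would record two consequences: a regularity bound $\int_0^\infty\kappa^{2n}S(\kappa)^2\,{\rm d}\kappa\le C\|f\|_{H^n}^2\le CM^2$, which makes the very high modes negligible, and a quantitative analytic-continuation estimate that propagates the smallness $|S|\le\epsilon_4$ on $(0,1)$ outward.

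The crux is that continuation estimate: for an entire function of exponential type one that is bounded by $CM$ on $\mathbb{R}$ and by $\epsilon_4$ on $(0,1)$, a Hadamard three-lines argument (or the two-constants theorem applied to a conformal map of a strip) gives a pointwise bound of the form $|S(\kappa)|\le CM^{1-\mu(\kappa)}\epsilon_4^{\mu(\kappa)}$ on an interval $(1,L)$, with $\mu(\kappa)$ decaying in a controlled way. Feeding this into the tail and balancing it against the regularity bound $M^2/s^{2n}$ at a free cutoff $s$ is where the delicate part lies: optimizing $s$ over the available parameters — the number of usable frequencies, which scales like $N$, the smallness $\epsilon_4$ entering through $|\ln\epsilon_4|$, and the order $n$ entering through Stirling-type constants in the continuation — should produce the effective cutoff $s\sim N^{5/8}|\ln\epsilon_4|^{1/9}/(6n-3)^3$ and hence the denominator $\bigl(N^{5/8}|\ln\epsilon_4|^{1/9}/(6n-3)^3\bigr)^{2n-1}$.

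I expect the main obstacle to be this analytic-continuation step together with the bookkeeping in the optimization. Tracking the precise dependence of the continuation exponent $\mu(\kappa)$ on $\kappa$ and on the exponential type, and then choosing $s$ so that the $n$-dependent constants collapse into the clean factor $(6n-3)^3=27(2n-1)^3$, is what generates the fractional powers $5/8$ and $1/9$ as well as the loss from $2n$ to $2n-1$ in the exponent; ensuring that $C_3$ remains independent of $n,N,M,\kappa$ throughout is the part that requires the most care.
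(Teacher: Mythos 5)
Your proposal follows essentially the same route as the paper's proof: the identity $2\kappa\,{\rm Re}\,v(0,\kappa)=\int_0^1\sin(\kappa x)(f_1(x)-f_2(x))\,{\rm d}x$ (Lemma 4.1), the sine-series Parseval decomposition of $\|f_1-f_2\|^2_{L^2(0,1)}$, a two-constants analytic-continuation bound on a sector propagating the smallness $\epsilon_4$ from $(0,1)$ to larger $\kappa$ (Lemmas 3.5 and 4.2), an $H^n$ integration-by-parts tail estimate (Lemma 4.3), and a final cutoff optimization using $e^{-x}\leq (6n-3)!/x^{3(2n-1)}$ to produce the $\bigl(N^{\frac{5}{8}}|\ln\epsilon_4|^{\frac{1}{9}}/(6n-3)^3\bigr)^{2n-1}$ denominator. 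The only cosmetic deviation is that you always split the series at $j=N$ and retain $\epsilon_3$ for the low modes, whereas the paper splits at a regime-dependent cutoff $T$ equal to $[N^{\frac{3}{4}}|\ln\epsilon_4|^{\frac{1}{9}}]$ or $N$ (the exponent $\frac{5}{8}$ arising from the regime $N^{\frac{3}{8}}\geq 2^{-\frac{5}{6}}\pi^{-\frac{2}{3}}|\ln\epsilon_4|^{\frac{1}{9}}$), which does not change the substance of the argument.
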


\begin{rema}
Theorem \ref{mr2} shows that only one-sided boundary observation data are
needed for the wavenumbers in the set $(0, 1)\cup\cup_{j=1}^N j\pi$ if one wants
to determine the mean of the random source. The stability increases as $N$ or
$n<\left[\frac{N^{\frac{5}{24}}|\ln\epsilon_4|^{\frac{1}{27}}+3}{6}\right]$
increases.
\end{rema}

The remainder of the paper is organized as follows. We prove Theorem \ref{mr1}
and Theorem \ref{mr2} in section 3 and section 4, respectively.

\section{Proof of Theorem \ref{mr1}}

First we present several useful lemmas.

\begin{lemm}
 Let $f_j, g_j\in L^2(0, 1), j =1, 2$. We have
 \begin{align*}
 \|f_1-f_2\|^2_{L^2(0, 1)}&=\frac{2}{\pi}\int_0^\infty \kappa^2 \Bigl(|v(0,
\kappa)|^2+|v(1, \kappa)|^2 \Bigr) {\rm d}\kappa,\\
  \|g_1-g_2\|^2_{L^2(0, 1)}&=\frac{16}{\pi}\int_0^\infty \kappa^4 \Bigl(
|w(0, \kappa)|^2+|w(1, \kappa)|^2\Bigr) {\rm d}\kappa.
 \end{align*}
\end{lemm}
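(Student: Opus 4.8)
Both identities are the Plancherel theorem in disguise. The strategy is to use the explicit solution formula \eqref{sol} to express the boundary values $v(0,\kappa), v(1,\kappa)$ and $w(0,\kappa), w(1,\kappa)$ as Fourier transforms of the zero-extended differences $F:=f_1-f_2$ and $G:=g_1-g_2$, and then to recover the $L^2$-norms by integrating the squared moduli over all frequencies. The compact support in $(0,1)$ is what lets one dispose of the absolute value $|x-y|$ at the two endpoints: for $x=0$ one has $|0-y|=y$ and for $x=1$ one has $|1-y|=1-y$ whenever $y\in(0,1)$, which turns each boundary value into an honest Fourier integral. Throughout I fix the convention $\widehat{F}(\kappa)=\int_0^1 e^{-{\rm i}\kappa y}F(y)\,{\rm d}y$ with $\|F\|_{L^2(0,1)}^2=\frac{1}{2\pi}\int_{-\infty}^\infty|\widehat{F}(\kappa)|^2\,{\rm d}\kappa$.

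\textbf{The mean part.} Since the It\^o integral in \eqref{sol} has zero expectation, ${\bf E}u(x,\kappa)=\frac{1}{2{\rm i}\kappa}\int_0^1 e^{{\rm i}\kappa|x-y|}f(y)\,{\rm d}y$, so $v(x,\kappa)=\frac{1}{2{\rm i}\kappa}\int_0^1 e^{{\rm i}\kappa|x-y|}F(y)\,{\rm d}y$. Evaluating at the endpoints and using the support gives $2{\rm i}\kappa\,v(0,\kappa)=\widehat{F}(-\kappa)$ and $2{\rm i}\kappa\,e^{-{\rm i}\kappa}v(1,\kappa)=\widehat{F}(\kappa)$, hence $|\widehat{F}(-\kappa)|=2\kappa|v(0,\kappa)|$ and $|\widehat{F}(\kappa)|=2\kappa|v(1,\kappa)|$. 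Splitting the Plancherel integral into its positive- and negative-frequency halves,
\[
\|F\|_{L^2(0,1)}^2=\frac{1}{2\pi}\int_0^\infty\Bigl(|\widehat{F}(\kappa)|^2+|\widehat{F}(-\kappa)|^2\Bigr){\rm d}\kappa=\frac{2}{\pi}\int_0^\infty\kappa^2\Bigl(|v(0,\kappa)|^2+|v(1,\kappa)|^2\Bigr){\rm d}\kappa,
\]
which is the first identity.

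\textbf{The variance part and the main obstacle.} The crux is to compute ${\bf V}u(x,\kappa)={\bf E}\bigl(u(x,\kappa)-{\bf E}u(x,\kappa)\bigr)^2$, where the fluctuation equals the It\^o integral $\int_0^1 h(y)\,{\rm d}W_y$ with $h(y)=\frac{e^{{\rm i}\kappa|x-y|}}{2{\rm i}\kappa}\sigma(y)$. The delicate point is that the paper's variance uses the \emph{square} rather than the modulus square, so I need the It\^o isometry in the form ${\bf E}\bigl(\int h\,{\rm d}W\bigr)^2=\int h^2\,{\rm d}y$ for a complex deterministic integrand; writing $h=a+{\rm i}b$ and expanding confirms ${\bf E}(\int h\,{\rm d}W)^2=\int(a^2-b^2+2{\rm i}ab)=\int h^2$. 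This is exactly the computation that makes the argument close, because squaring $h$ \emph{doubles the phase}: ${\bf V}u(x,\kappa)=-\frac{1}{4\kappa^2}\int_0^1 e^{2{\rm i}\kappa|x-y|}g(y)\,{\rm d}y$ with $g=\sigma^2$. Consequently $w(x,\kappa)=-\frac{1}{4\kappa^2}\int_0^1 e^{2{\rm i}\kappa|x-y|}G(y)\,{\rm d}y$, and evaluating at the endpoints yields $4\kappa^2|w(0,\kappa)|=|\widehat{G}(-2\kappa)|$ and $4\kappa^2|w(1,\kappa)|=|\widehat{G}(2\kappa)|$. Applying Plancherel to $G$ and performing the change of variable $\mu=2\kappa$ (which contributes a Jacobian factor $2$ and, together with the $16\kappa^4$ from the prefactor, produces the constant $16$) gives
\[
\|G\|_{L^2(0,1)}^2=\frac{16}{\pi}\int_0^\infty\kappa^4\Bigl(|w(0,\kappa)|^2+|w(1,\kappa)|^2\Bigr){\rm d}\kappa,
\]
the second identity. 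The only genuinely nonroutine step is the complex It\^o isometry: recognizing that the author's non-conjugated definition of ${\bf V}$ is precisely what converts the variance into a clean Fourier integral of $G$ at the doubled frequency $2\kappa$.
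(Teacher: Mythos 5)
Your proof is correct, and it handles the two genuinely delicate points properly: the non-conjugated complex It\^o isometry ${\bf E}\bigl(\int_0^1 h\,{\rm d}W_y\bigr)^2=\int_0^1 h^2\,{\rm d}y$, which you actually verify by expanding $h=a+{\rm i}b$ (the paper merely asserts it in the line beginning ``Noting''), and the constant bookkeeping in the variance part, where the Jacobian $2$ from $\mu=2\kappa$ combines with the $16\kappa^4$ prefactor to give $\frac{16}{\pi}$. Your route, however, differs from the paper's proof of this lemma: you read the boundary identities $2{\rm i}\kappa\,v(0,\kappa)=\widehat{F}(-\kappa)$ and $2{\rm i}\kappa\,e^{-{\rm i}\kappa}v(1,\kappa)=\widehat{F}(\kappa)$ (and their variance analogues at the doubled frequency $2\kappa$) directly off the explicit representation \eqref{sol}, using $|0-y|=y$ and $|1-y|=1-y$ on the support, whereas the paper derives them from the differential equation itself: it multiplies \eqref{she} by $e^{-{\rm i}\xi x}$, integrates by parts over $(0,1)$, and applies the outgoing conditions \eqref{owc} to reach the identity \eqref{a4}, from which it takes expectations and variances at $\xi=\pm\kappa$. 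The intermediate identities and the final Plancherel step are then identical; in fact your derivation is precisely the argument the paper uses later to prove its Lemma 3.2, so your proof in effect establishes Lemma 3.2 first and deduces Lemma 3.1 from it, which is arguably a cleaner organization. What the paper's PDE-based route buys is independence from the representation formula -- the Fourier identities hold for any solution of the boundary value problem \eqref{she}--\eqref{owc}, with \eqref{a4} also serving as the natural statement of what the boundary data determine -- while your route is shorter given that \eqref{sol} has already been quoted as the unique pathwise solution.
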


\begin{proof}
 Letting $\xi\in\mathbb{R}$ with $|\xi|=\kappa$, we multiply $e^{-{\rm i}\xi
x}$ on both sides of \eqref{she} and obtain
\[
 e^{-{\rm i}\xi x}u''(x, \kappa)+\kappa^2 e^{-{\rm i}\xi x} u(x,
\kappa)=e^{-{\rm i}\xi x} f(x)+e^{-{\rm i}\xi x} \sigma(x) \dot{W}_x.
\]
Since
\[
(e^{-{\rm i}\xi x}u'(x, \kappa))'=e^{-{\rm i}\xi x} u''(x, \kappa)-{\rm i}\xi
e^{-{\rm i}\xi x} u'(x, \kappa),
\]
we have
\begin{equation}\label{a1}
 (e^{-{\rm i}\xi x}u'(x, \kappa))'=e^{-{\rm i}\xi x} f(x)+e^{-{\rm i}\xi
x}\sigma(x)\dot{W}_x-\kappa^2 e^{-{\rm i}\xi x} u(x, \kappa)-{\rm i}\xi
e^{-{\rm i}\xi x} u'(x, \kappa).
\end{equation}
Integrating \eqref{a1} over $(0, 1)$ with respect to $x$ yields
\begin{align}\label{a2}
 e^{-{\rm i}\xi}u'(1, \kappa)-u'(0, \kappa)=&\int_0^1 e^{-{\rm i}\xi x} f(x){\rm
d}x+\int_0^1 e^{-{\rm i}\xi x}\sigma(x){\rm d}W_x\notag\\
&-\kappa^2\int_0^1 e^{-{\rm i}\xi x} u(x, \kappa){\rm d}x-{\rm i}\xi
\int_0^1 e^{-{\rm i}\xi x} u'(x, \kappa){\rm d}x.
\end{align}
It follows from the integration by parts that
\begin{equation}\label{a3}
 -{\rm i}\xi \int_0^1 e^{-{\rm i}\xi x} u'(x, \kappa){\rm d}x=-{\rm i}\xi
e^{-{\rm i}\xi}u(1, \kappa)+{\rm i}\xi u(0, \kappa)+\kappa^2\int_0^1 e^{-{\rm
i}\xi x}u(x, \kappa){\rm d}x.
\end{equation}
Substituting \eqref{a3} into \eqref{a2}, we get
\[
 e^{-{\rm i}\xi}u'(1, \kappa)+{\rm i}\xi e^{-{\rm i}\xi} u(1, \kappa)-u'(0,
\kappa)-{\rm i}\xi u(0, \kappa)=\int_0^1 e^{-{\rm i}\xi x}f(x){\rm d}x
+\int_0^1 e^{-{\rm i}\xi x} \sigma(x){\rm d}W_x,
\]
which gives after applying the outgoing wave conditions \eqref{owc} that
\begin{equation}\label{a4}
 {\rm i}(\kappa+\xi)e^{-{\rm i}\xi}u(1, \kappa)+{\rm i}(\kappa-\xi)u(0,
\kappa)=\int_0^1 e^{-{\rm i}\xi x}f(x){\rm d}x +\int_0^1 e^{-{\rm i}\xi x}
\sigma(x){\rm d}W_x.
\end{equation}
Taking the expectation on both sides of \eqref{a4}, we obtain
\begin{equation}\label{a5}
 \int_0^1 e^{-{\rm i}\xi x}f(x){\rm d}x={\rm i}(\kappa+\xi)e^{-{\rm i}\xi}{\bf
E}u(1, \kappa)+{\rm i}(\kappa-\xi){\bf E}u(0, \kappa), \quad|\xi|=\kappa\in (0,
\infty).
\end{equation}
Since $f_j$ is assumed to have a compact support in $[0, 1]$, we have from
\eqref{a5} that
\[
 \hat{f}_j(\xi)= \int_{-\infty}^\infty e^{-{\rm i}\xi x}f_j(x){\rm d}x={\rm
i}(\kappa+\xi)e^{-{\rm i}\xi}{\bf E}u_j(1, \kappa)+{\rm i}(\kappa-\xi){\bf
E}u_j(0, \kappa), \quad|\xi|=\kappa\in (0, \infty),
\]
which gives
\begin{align*}
\hat{f}_1(\xi)-\hat{f}_2(\xi) =\int_{-\infty}^\infty e^{-{\rm i}\xi
x}(f_1(x)-f_2(x)){\rm d}x={\rm i}(\kappa+\xi)e^{-{\rm i}\xi}({\bf E}u_1(1,
\kappa)-{\bf E}u_2(1, \kappa))\\
+{\rm i}(\kappa-\xi)({\bf E}u_1(0, \kappa)-{\bf E}u_2(0, \kappa)),
\quad|\xi|=\kappa\in (0, \infty).
\end{align*}
Hence we have
\[
 \hat{f}_1(-\kappa)-\hat{f}_2(-\kappa)=2{\rm i}\kappa ({\bf E}u_1(0,
\kappa)-{\bf E}u_2(0, \kappa))=2{\rm i}\kappa v(0, \kappa)
\]
and
\[
\hat{f}_1(\kappa)-\hat{f}_2(\kappa)=2{\rm i}e^{-{\rm i}\kappa}\kappa({\bf
E}u_1(1, \kappa)-{\bf E}u_2(1, \kappa))=2{\rm i}\kappa e^{-{\rm i}\kappa} v(1,
\kappa).
\]

It follows from the Plancherel theorem that
\begin{align*}
 \|f_1-f_2\|^2_{L^2(0, 1)}&=\frac{1}{2\pi}\int_{-\infty}^\infty|\hat{f}
_1(\xi)-\hat{f}_2(\xi)|^2{\rm d}\xi\\
&=\frac{1}{2\pi}\int_{-\infty}^0 |\hat{f}
_1(\xi)-\hat{f}_2(\xi)|^2{\rm d}\xi+\frac{1}{2\pi}\int_0^\infty |\hat{f}
_1(\xi)-\hat{f}_2(\xi)|^2{\rm d}\xi\\
&=\frac{1}{2\pi}\int_0^\infty |\hat{f}_1(-\kappa)-\hat{f}_2(-\kappa)|^2{\rm
d}\kappa+\frac{1}{2\pi}\int_0^\infty |\hat{f}_1(\kappa)-\hat{f}_2(\kappa)|^2{
\rm d}\kappa\\
&=\frac{2}{\pi}\int_0^\infty \kappa^2|v(0, \kappa)|^2{\rm d}\kappa +
\frac{2}{\pi}\int_0^\infty \kappa^2|v(1, \kappa)|^2{\rm d}\kappa.
\end{align*}
Noting
\[
 {\bf E}\Bigl(\int_0^1 e^{-{\rm i}\xi x}\sigma(x){\rm d}W_x\Bigr)^2=\int_0^1
e^{-2{\rm i}\xi x}\sigma^2(x){\rm d}x=\int_0^1 e^{-2{\rm i}\xi x}g(x){\rm
d}x,
\]
we have from \eqref{a4} that
\begin{equation}\label{a6}
 \int_0^1 e^{-2{\rm i}\xi x}g(x){\rm d}x={\bf V}\bigl({\rm
i}(\kappa+\xi)e^{-{\rm i}\xi}u(1, \kappa)+{\rm i}(\kappa-\xi)u(0,
\kappa)\bigr), \quad|\xi|=\kappa\in(0, \infty).
\end{equation}
Since $g_j$ has a compact support in $(0, 1)$, we get from \eqref{a6} that
\[
\hat{g}_j(2\xi)= \int_{-\infty}^\infty e^{-2{\rm i}\xi x}g_j(x){\rm d}x={\bf
V}\bigl({\rm i}(\kappa+\xi)e^{-{\rm i}\xi}u_j(1, \kappa)+{\rm
i}(\kappa-\xi)u_j(0, \kappa)\bigr), \quad|\xi|=\kappa\in(0, \infty),
\]
which gives
\begin{align*}
& \hat{g}_1(2\xi)-\hat{g}_2(2\xi)= \int_{-\infty}^\infty e^{-2{\rm i}\xi
x}(g_1(x)-g_2(x)){\rm d}x\\
&={\bf V}\bigl({\rm i}(\kappa+\xi)e^{-{\rm i}\xi}u_1(1, \kappa)+{\rm
i}(\kappa-\xi)u_1(0, \kappa)\bigr)-{\bf
V}\bigl({\rm i}(\kappa+\xi)e^{-{\rm i}\xi}u_2(1, \kappa)+{\rm
i}(\kappa-\xi)u_2(0, \kappa)\bigr).
\end{align*}
Hence we have
\[
 \hat{g}_1(-2\kappa)-\hat{g}_2(-2\kappa)=(2{\rm i}\kappa)^2 \bigl({\bf
V}u_1(0, \kappa)-{\bf V}u_2(0, \kappa)\bigr)=(2{\rm i}\kappa)^2 w(0, \kappa)
\]
and
\[
 \hat{g}_1(2\kappa)-\hat{g}_2(2\kappa)=(2{\rm i}\kappa)^2 e^{-2{\rm i}\kappa}
\bigl({\bf V}u_1(1, \kappa)-{\bf V}u_2(1, \kappa)\bigr)=(2{\rm i}\kappa)^2
e^{-2{\rm i}\kappa} w(1, \kappa).
\]
Using the Plancherel theorem again yields
\begin{align*}
 \|g_1-g_2\|^2_{L^2(0, 1)}&=\frac{1}{2\pi}\int_{-\infty}^\infty|\hat{g}
_1(\xi)-\hat{g}_2(\xi)|^2{\rm d}\xi=\frac{1}{\pi}\int_{-\infty}^\infty|\hat{g}
_1(2\xi)-\hat{g}_2(2\xi)|^2{\rm d}\xi\\
&=\frac{1}{\pi}\int_0^\infty |\hat{g}
_1(-2\kappa)-\hat{g}_2(-2\kappa)|^2{\rm d}\kappa+\frac{1}{\pi}\int_0^\infty
|\hat{g} _1(2\kappa)-\hat{g}_2(2\kappa)|^2{\rm d}\kappa\\
&=\frac{16}{\pi}\int_0^\infty \kappa^4 |w(0, \kappa)|^2{\rm
d}\kappa+\frac{16}{\pi}\int_0^\infty \kappa^2 |w(1, \kappa)|^4{\rm d}\kappa,
\end{align*}
which completes the proof.
\end{proof}

\begin{lemm}
 Let $f_j, g_j\in L^2(0, 1), j =1, 2$. We have
 \begin{align*}
  4\kappa^2 |v(0, \kappa)|^2&=\Bigl|\int_0^1
e^{{\rm i}\kappa x}(f_1(x)-f_2(x)){\rm d}x\Bigr|^2,\\
4\kappa^2 |v(1, \kappa)|^2&=\Bigl|\int_0^1
e^{-{\rm i}\kappa x}(f_1(x)-f_2(x)){\rm d}x\Bigr|^2,\\
16\kappa^4 |w(0, \kappa)|^2&=\Bigl|\int_0^1
e^{2{\rm i}\kappa x}(g_1(x)-g_2(x)) {\rm d}x\Bigr|^2,\\
16\kappa^4 |w(1, \kappa)|^2&=\Bigl|\int_0^1
e^{-2{\rm i}\kappa x}(g_1(x)-g_2(x)) {\rm d}x\Bigr|^2.
 \end{align*}
\end{lemm}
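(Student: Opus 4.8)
The plan is to read these four identities directly off the computations already carried out in the previous lemma, since each left-hand side is merely the modulus squared of a Fourier coefficient that appeared there. First I would recall that, because $f_j$ and $g_j$ are supported in $(0,1)$, their Fourier transforms reduce to integrals over $(0,1)$, namely $\hat{f}_j(\xi)=\int_0^1 e^{-{\rm i}\xi x}f_j(x){\rm d}x$ and $\hat{g}_j(\eta)=\int_0^1 e^{-{\rm i}\eta x}g_j(x){\rm d}x$, so that the boundary quantities $v$ and $w$ are literally these coefficients up to explicit prefactors.

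Next I would evaluate the mean relation $\hat{f}_1(\xi)-\hat{f}_2(\xi)={\rm i}(\kappa+\xi)e^{-{\rm i}\xi}v(1,\kappa)+{\rm i}(\kappa-\xi)v(0,\kappa)$ from the previous lemma at the two admissible values $\xi=-\kappa$ and $\xi=\kappa$ (recall $|\xi|=\kappa$). The choice $\xi=-\kappa$ annihilates the $v(1,\kappa)$ term and yields $\int_0^1 e^{{\rm i}\kappa x}(f_1-f_2){\rm d}x=2{\rm i}\kappa\, v(0,\kappa)$, while $\xi=\kappa$ annihilates the $v(0,\kappa)$ term and yields $\int_0^1 e^{-{\rm i}\kappa x}(f_1-f_2){\rm d}x=2{\rm i}\kappa\, e^{-{\rm i}\kappa}v(1,\kappa)$. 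Taking moduli squared and using $|2{\rm i}\kappa|^2=4\kappa^2$ together with $|e^{-{\rm i}\kappa}|=1$ then produces the first two identities verbatim.

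Then I would repeat the same two evaluations for the variance relation $\hat{g}_1(2\xi)-\hat{g}_2(2\xi)={\bf V}\bigl({\rm i}(\kappa+\xi)e^{-{\rm i}\xi}u_j(1,\kappa)+{\rm i}(\kappa-\xi)u_j(0,\kappa)\bigr)$ differenced in $j$. At $\xi=-\kappa$ only the $u_j(0,\kappa)$ term survives, and at $\xi=\kappa$ only the $u_j(1,\kappa)$ term survives; pulling the deterministic scalar coefficient $c$ out of the (complex) variance via ${\bf V}(cX)=c^2{\bf V}(X)$ gives $\int_0^1 e^{2{\rm i}\kappa x}(g_1-g_2){\rm d}x=(2{\rm i}\kappa)^2 w(0,\kappa)$ and $\int_0^1 e^{-2{\rm i}\kappa x}(g_1-g_2){\rm d}x=(2{\rm i}\kappa)^2 e^{-2{\rm i}\kappa}w(1,\kappa)$. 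Squaring moduli and using $|(2{\rm i}\kappa)^2|=4\kappa^2$ and $|e^{-2{\rm i}\kappa}|=1$ yields the factor $16\kappa^4$ in the last two identities.

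There is no substantive obstacle here: the claim is essentially a bookkeeping corollary of the Fourier-side identities already established. The only points demanding a little care are the correct vanishing of exactly one boundary term at each of $\xi=\pm\kappa$, the scalar pull-out $c^2$ for the nonstandard complex variance $\mathbf{V}$ in the $g$-part (where one must remember that no complex conjugate is taken, so the factor is $c^2$ rather than $|c|^2$), and the fact that the unimodular phases $e^{-{\rm i}\kappa}$ and $e^{-2{\rm i}\kappa}$ simply disappear once absolute values are taken.
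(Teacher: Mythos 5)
Your proof is correct, but it reaches the four identities by a different route than the paper. The paper's own proof of this lemma goes directly through the explicit pathwise solution \eqref{sol}: evaluating $2{\rm i}\kappa u_j(x,\kappa)=\int_0^1 e^{{\rm i}\kappa|x-y|}f_j(y)\,{\rm d}y+\int_0^1 e^{{\rm i}\kappa|x-y|}\sigma_j(y)\,{\rm d}W_y$ at $x=0$ and $x=1$ (where $|x-y|$ becomes $y$ and $1-y$ respectively), then taking expectations and variances, the latter via the It\^{o}-type identity ${\bf E}\bigl(\int_0^1 h\,{\rm d}W_x\bigr)^2=\int_0^1 h^2\,{\rm d}x$. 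You instead recycle the identity \eqref{a4} from the proof of Lemma 3.1 --- obtained by multiplying the equation by $e^{-{\rm i}\xi x}$, integrating by parts, and imposing the outgoing conditions \eqref{owc} --- and specialize to $\xi=\pm\kappa$, where exactly one boundary coefficient ${\rm i}(\kappa\mp\xi)$ vanishes. Both routes produce the same intermediate relations, e.g.\ $2{\rm i}\kappa\,v(0,\kappa)=\int_0^1 e^{{\rm i}\kappa x}(f_1-f_2)\,{\rm d}x$ and $(2{\rm i}\kappa)^2 w(0,\kappa)=\int_0^1 e^{2{\rm i}\kappa x}(g_1-g_2)\,{\rm d}x$, after which taking moduli kills the unimodular phases $e^{-{\rm i}\kappa}$, $e^{-2{\rm i}\kappa}$. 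Your approach buys economy (it needs only the equation plus boundary conditions, not the Green's function representation, and reuses computations already on the page), while the paper's version is self-contained within this lemma and makes the It\^{o} isometry step explicit at the point of use. One point you handled correctly and that deserves the emphasis you gave it: since ${\bf V}(X)={\bf E}(X-{\bf E}X)^2$ here carries no complex conjugate, the scalar pull-out is ${\bf V}(cX)=c^2{\bf V}(X)$ with $c^2$, not $|c|^2$, which is precisely why $(2{\rm i}\kappa)^2=-4\kappa^2$ appears and why the final squared-modulus factor is $16\kappa^4$; note also that at $\xi=\pm\kappa$ the surviving combination is a single scalar multiple of $u_j$ at one endpoint, so no cross-covariance terms arise in the variance.
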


\begin{proof}
 It follows from \eqref{sol} that the solution of \eqref{she}--\eqref{owc}
is
\[
2{\rm i}\kappa u_j(x, \kappa)=\int_0^1 e^{{\rm i}\kappa|x-y|}f_j(y){\rm
d}y+\int_0^1 e^{{\rm i}\kappa|x-y|}\sigma_j(y){\rm d}W_y,
\]
which gives
\begin{align}
\label{b1} 2{\rm i}\kappa u_j(0, \kappa)&=\int_0^1 e^{{\rm i}\kappa x}f_j(x){\rm
d}x+\int_0^1 e^{{\rm i}\kappa x}\sigma_j(x){\rm d}W_x,\\
\label{b2} 2{\rm i}\kappa u_j(1, \kappa)&=\int_0^1 e^{{\rm i}\kappa
(1-x)}f_j(x){\rm
d}x+\int_0^1 e^{{\rm i}\kappa (1-x)}\sigma_j(x){\rm d}W_x.
\end{align}
Taking expectation of \eqref{b1} and \eqref{b2}, we may obtain
\begin{align*}
 2{\rm i}\kappa v(0, \kappa)&=\int_0^1 e^{{\rm
i}\kappa x}(f_1(x)-f_2(x)){\rm d}x,\\
 2{\rm i}\kappa v(1, \kappa)&=\int_0^1  e^{{\rm
i}\kappa (1-x)}(f_1(x)-f_2(x)){\rm d}x.
\end{align*}
Taking the variance of \eqref{b1} and \eqref{b2} yields
\begin{align*}
 -4\kappa^2 w(0, \kappa)&=\int_0^1 e^{2{\rm
i}\kappa x}(g_1(x)-g_2(x)){\rm d}x,\\
-4\kappa^2 w(1, \kappa)&=\int_0^1  e^{2{\rm
i}\kappa(1- x)}(g_1(x)-g_2(x)){\rm d}x,
\end{align*}
which completes the proof by taking square of the amplitudes on both sides of
the above four equations.
\end{proof}

Let
\begin{align}
\label{i1}I_1(s)&=\int_0^s \left|\int_0^1 e^{{\rm i}\kappa x}(f_1(x)-f_2(x)){\rm
d}x\right|^2{\rm d}\kappa+\int_0^s \left|\int_0^1 e^{-{\rm i}\kappa
x}(f_1(x)-f_2(x)){\rm d}x\right|^2{\rm d}\kappa,\\
\label{i2}I_2(s)&=\int_0^s \left|\int_0^1 e^{2{\rm i}\kappa
x}(g_1(x)-g_2(x)){\rm d}x \right|^2{\rm d}\kappa+\int_0^s \left|\int_0^1
e^{-2{\rm i}\kappa x}(g_1(x)-g_2(x)){\rm d}x\right|^2{\rm d}\kappa.
\end{align}
The integrands in \eqref{i1} and \eqref{i2} are entire analytic function of
$\kappa$. The integrals with respect to $s$ can be taken over any path joining
points $0$ and $\kappa$ in the complex plane. Thus $I_1(s)$ and $I_2(s)$ are
entire analytic functions of $s=s_1+{\rm i}s_2, s_1, s_2\in\mathbb{R}$.

\begin{lemm}
 Let $f_j, g_j\in L^2(0, 1), j =1, 2$. We have  for any $s=s_1+{\rm i}s_2, s_1,
s_2\in\mathbb{R}$ that
 \begin{align*}
  |I_1(s)|&\leq 2|s|e^{2|s_2|}\int_0^1 |f_1(x)-f_2(x)|^2{\rm d}x,\\
  |I_2(s)|&\leq 2|s|e^{4|s_2|}\int_0^1 |g_1(x)-g_2(x)|^2{\rm d}x.
 \end{align*}
\end{lemm}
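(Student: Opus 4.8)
The plan is to realize $I_1$ and $I_2$ as contour integrals of genuinely entire integrands and then estimate along the straight segment from $0$ to $s$. The first thing I would pin down is the analytic continuation hidden in the modulus. Writing $\phi=f_1-f_2$, for real $\kappa$ one has the factorization
\[
\Bigl|\int_0^1 e^{{\rm i}\kappa x}\phi(x)\,{\rm d}x\Bigr|^2
=\Bigl(\int_0^1 e^{{\rm i}\kappa x}\phi(x)\,{\rm d}x\Bigr)
 \Bigl(\int_0^1 e^{-{\rm i}\kappa x}\overline{\phi(x)}\,{\rm d}x\Bigr),
\]
and the right-hand side is a product of two entire functions of $\kappa$, hence entire. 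This is precisely the continuation meant in the definition of $I_1(s)$; the same applies to the $e^{-{\rm i}\kappa x}$ term, and to the two terms of $I_2$ with $\psi=g_1-g_2$ and $e^{\pm2{\rm i}\kappa x}$.

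Since the integrands are entire, $I_1(s)$ is path independent by the remark preceding the statement, so I would integrate along $\kappa=ts$, $t\in[0,1]$, giving ${\rm d}\kappa=s\,{\rm d}t$. Along this segment the imaginary part satisfies $|{\rm Im}\,\kappa|=t|s_2|\le|s_2|$, which reduces the whole estimate to a uniform pointwise bound on each factorized integrand over the segment.

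The key pointwise estimate is elementary: for $\kappa=\kappa_1+{\rm i}\kappa_2$ and $x\in[0,1]$ one has $|e^{{\rm i}\kappa x}|=e^{-\kappa_2 x}\le e^{|\kappa_2|}$, and likewise $|e^{-{\rm i}\kappa x}|\le e^{|\kappa_2|}$. Combining this with $\|\phi\|_{L^1(0,1)}\le\|\phi\|_{L^2(0,1)}$ (Cauchy--Schwarz on a unit-length interval) yields
\[
\Bigl|\int_0^1 e^{{\rm i}\kappa x}\phi\,{\rm d}x\Bigr|\le e^{|\kappa_2|}\|\phi\|_{L^2(0,1)},
\]
and the same bound for the conjugate-reflected factor. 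Hence each factorized term is at most $e^{2|\kappa_2|}\|\phi\|^2_{L^2(0,1)}\le e^{2|s_2|}\|\phi\|^2_{L^2(0,1)}$ on the segment. Summing the two terms of $I_1$ and integrating $|s|\int_0^1(\cdots)\,{\rm d}t$ then produces the claimed $|I_1(s)|\le 2|s|e^{2|s_2|}\|f_1-f_2\|^2_{L^2(0,1)}$. The argument for $I_2$ is identical except that each factor $e^{\pm2{\rm i}\kappa x}$ contributes $e^{2|\kappa_2|}$, so each term carries $e^{4|\kappa_2|}$ and the final exponential is $e^{4|s_2|}$.

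I do not anticipate a genuine obstacle; the only point demanding care is that $|\,\cdot\,|^2$ is not itself analytic in $\kappa$, so one must replace it by its entire continuation—the product of the integral with its conjugate-reflected partner—and estimate that continuation rather than a true modulus. Once this bookkeeping is in place, the whole proof collapses to the trivial exponential bound $|e^{\pm{\rm i}\kappa x}|\le e^{|{\rm Im}\,\kappa|}$ on $[0,1]$ together with path independence.
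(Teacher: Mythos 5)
Your proof is correct and follows essentially the same route as the paper: the substitution $\kappa=ts$, $t\in[0,1]$, the pointwise bound $|e^{\pm{\rm i}\kappa x}|\le e^{|{\rm Im}\,\kappa|}\le e^{|s_2|}$ on the segment, and the Cauchy--Schwarz step $\|\phi\|_{L^1(0,1)}\le\|\phi\|_{L^2(0,1)}$, with the factor $2$ for $I_2$ coming from $e^{\pm 2{\rm i}\kappa x}$. Your only addition is to make explicit that for complex $\kappa$ the integrand must be read as the entire continuation, namely the product of the integral with its conjugate-reflected partner, a point the paper leaves implicit when it writes $|\cdot|^2$ along the complex path; this is a welcome clarification rather than a different argument.
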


\begin{proof}
 Let $\kappa=st, t\in(0, 1)$. A simple calculation yields
 \begin{align*}
  I_1(s)=s\int_0^1 \left|\int_0^1 e^{{\rm i}st x}(f_1(x)-f_2(x)){\rm d}x
\right|^2{\rm d}t+s\int_0^1  \left|\int_0^1 e^{-{\rm i}st x}(f_1(x)-f_2(x)){\rm
d}x \right|^2{\rm d}t.
\end{align*}
Noting that $|e^{\pm{\rm i}st x}|\leq e^{|s_2|}$ for all $x\in(0, 1)$,
we have
\[
 |I_1(s)|\leq 2|s|\int_0^1 \Bigl(\int_0^1 e^{2|s_2|}|f_1(x)-f_2(x)|^2{\rm d}x
\Bigr){\rm d}t\leq 2|s|e^{2|s_2|}\int_0^1 |f_1(x)-f_2(x)|^2 {\rm d}x.
\]
Similarly, we can show that
\[
  |I_2(s)|\leq 2|s|e^{4|s_2|}\int_0^1 |g_1(x)-g_2(x)|^2{\rm d}x.
\]
which completes the proof.
\end{proof}

\begin{lemm}
  Let $f_j, g_j\in H^n(0, 1), j =1, 2$. We have for any $s>0$ that
\begin{align*}
 4\int_s^\infty \kappa^2 \bigl(|v(0, \kappa)|^2 +|v(1, \kappa)|^2 \bigr){\rm
d}\kappa&\leq 2 s^{-(2n-1)}\| f_1-f_2\|^2_{H^n(0, 1)},\\
 16\int_s^\infty \kappa^4 \bigl(|w(0, \kappa)|^2 +|w(1, \kappa)|^2 \bigr){\rm
d}\kappa&\leq 2 s^{-(2n-1)}\| g_1-g_2\|^2_{H^n(0, 1)}.
\end{align*}
\end{lemm}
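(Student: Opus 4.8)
The plan is to reduce each tail integral to a pointwise estimate on an oscillatory integral and then to carry out the $\kappa$-integration explicitly. Write $F=f_1-f_2$ and $G=g_1-g_2$. As in the earlier lemmas these profiles are extended by zero and, coming from $\mathcal{C}_M$ or $\mathcal{R}_M$, have compact support contained in the open interval $(0,1)$; in particular $F,G\in H^n(0,1)$ vanish together with all derivatives up to order $n-1$ near the endpoints $x=0,1$. By the identities $4\kappa^2|v(0,\kappa)|^2=\bigl|\int_0^1 e^{{\rm i}\kappa x}F\,{\rm d}x\bigr|^2$, $4\kappa^2|v(1,\kappa)|^2=\bigl|\int_0^1 e^{-{\rm i}\kappa x}F\,{\rm d}x\bigr|^2$, and the analogous ones for $16\kappa^4|w(0,\kappa)|^2$ and $16\kappa^4|w(1,\kappa)|^2$ established in the second lemma above, it suffices to bound the integrals of these four oscillatory integrals over $(s,\infty)$.

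Next I would integrate by parts $n$ times in each oscillatory integral. Because the compact support kills every boundary contribution, each integration by parts produces a unimodular multiple of $\kappa^{-1}$ (respectively $(2\kappa)^{-1}$ for the $G$-integrals), so that
\[
\Bigl|\int_0^1 e^{{\rm i}\kappa x}F\,{\rm d}x\Bigr|=\kappa^{-n}\Bigl|\int_0^1 e^{{\rm i}\kappa x}F^{(n)}\,{\rm d}x\Bigr|,\qquad
\Bigl|\int_0^1 e^{2{\rm i}\kappa x}G\,{\rm d}x\Bigr|=(2\kappa)^{-n}\Bigl|\int_0^1 e^{2{\rm i}\kappa x}G^{(n)}\,{\rm d}x\Bigr|,
\]
and likewise for the phases $-{\rm i}\kappa x$ and $-2{\rm i}\kappa x$. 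Bounding the remaining integral pointwise by Cauchy--Schwarz on the unit interval gives $\bigl|\int_0^1 e^{\pm{\rm i}\kappa x}F^{(n)}\,{\rm d}x\bigr|\le\int_0^1|F^{(n)}|\,{\rm d}x\le\|F^{(n)}\|_{L^2(0,1)}$ and similarly $\bigl|\int_0^1 e^{\pm2{\rm i}\kappa x}G^{(n)}\,{\rm d}x\bigr|\le\|G^{(n)}\|_{L^2(0,1)}$, the estimates now being independent of $\kappa$.

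Combining these, the $\kappa$-dependence decouples and I would use the elementary identity $\int_s^\infty\kappa^{-2n}\,{\rm d}\kappa=\tfrac{1}{2n-1}s^{-(2n-1)}$. Summing the two boundary contributions yields
\[
4\int_s^\infty\kappa^2\bigl(|v(0,\kappa)|^2+|v(1,\kappa)|^2\bigr)\,{\rm d}\kappa\le\frac{2}{2n-1}\,s^{-(2n-1)}\|F^{(n)}\|_{L^2(0,1)}^2,
\]
and, with the extra factor $2^{-2n}$ coming from the phase $2\kappa$,
\[
16\int_s^\infty\kappa^4\bigl(|w(0,\kappa)|^2+|w(1,\kappa)|^2\bigr)\,{\rm d}\kappa\le\frac{2^{\,1-2n}}{2n-1}\,s^{-(2n-1)}\|G^{(n)}\|_{L^2(0,1)}^2.
\]
Since $\tfrac{1}{2n-1}\le1$ and $\tfrac{2^{1-2n}}{2n-1}\le1$ for $n\ge1$, and since $\|F^{(n)}\|_{L^2(0,1)}^2\le\|F\|_{H^n(0,1)}^2$ and $\|G^{(n)}\|_{L^2(0,1)}^2\le\|G\|_{H^n(0,1)}^2$, both prefactors are bounded by $2$ and the two claimed inequalities follow.

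The step that most needs care is the justification that every boundary term in the $n$-fold integration by parts vanishes: this is exactly where the support of $F$ and $G$ being compactly contained in the open interval $(0,1)$ is essential, and it is what forces the profiles to lie in $\mathcal{C}_M$ or $\mathcal{R}_M$ rather than merely in $H^n(0,1)$ (a function in $H^n(0,1)$ not vanishing at the endpoints would, after extension by zero, produce nonvanishing boundary terms and only $\kappa^{-1}$ decay). The remaining obstacle is purely the bookkeeping of constants, namely checking that the factors $\tfrac{1}{2n-1}$ and $2^{-2n}$ keep the final constant no larger than $2$, which is routine once $n\ge1$.
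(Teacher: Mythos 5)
Your proof is correct and follows essentially the same route as the paper: both reduce the tail integrals via the identities of Lemma 3.2, integrate by parts $n$ times using the compact support of $f_1-f_2$ and $g_1-g_2$ in $(0,1)$ to kill boundary terms, bound the resulting oscillatory integral by $\|F^{(n)}\|_{L^2(0,1)}$, and evaluate $\int_s^\infty\kappa^{-2n}\,{\rm d}\kappa=\frac{s^{-(2n-1)}}{2n-1}$ (with the extra $4^{-n}$ for the $g$-terms). Your remark that the support condition, tacit in the lemma's statement but used in the paper's proof as well, is what makes the boundary terms vanish is a fair and accurate observation.
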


\begin{proof}
It follows from Lemma 3.2 that we have
 \begin{align*}
  4&\int_s^\infty \kappa^2 |v(0, \kappa)|^2{\rm
d}\kappa+ 4\int_s^\infty \kappa^2 |v(1, \kappa)|^2{\rm d}\kappa\\
&=\int_s^\infty \Bigl|\int_0^1 e^{{\rm i}\kappa x}(f_1(x)-f_2(x)){\rm
d}x\Bigr|^2 {\rm d}\kappa +\int_s^\infty \Bigl|\int_0^1 e^{-{\rm i}\kappa
x}(f_1(x)-f_2(x)){\rm d}x\Bigr|^2
{\rm d}\kappa
 \end{align*}
 Using integration by parts and noting ${\rm supp}f_j\in (0, 1)$, we obtain
 \[
  \int_0^1 e^{\pm{\rm i}\kappa x}(f_1(x)-f_2(x)){\rm d}x=\frac{1}{(\pm{\rm
i}\kappa)^n}\int_0^1 e^{\pm{\rm i}\kappa x}(f_1^{(n)}(x)-f_2^{(n)}(x)){\rm d}x,
 \]
which gives
\[
 \Bigl|\int_0^1 e^{\pm{\rm i}\kappa x}(f_1(x)-f_2(x)){\rm d}x\Bigr|^2\leq
\kappa^{-2n}\|f^{(n)}_1-f_2^{(n)}\|^2_{H^n(0, 1)}
\]
Hence we get
\begin{align*}
 \int_s^\infty \Bigl|\int_0^1 e^{\pm{\rm i}\kappa x}(f_1(x)-f_2(x)){\rm
d}x\Bigr|^2 {\rm d}\kappa&\leq \|f^{(n)}_1-f_2^{(n)}\|^2_{H^n(0, 1)}
\int_s^\infty \kappa^{-2n}{\rm d}\kappa\\
&=\frac{s^{-(2n-1)}}{(2n-1)}\|f^{(n)}_1-f_2^{(n)}\|^2_{H^n(0, 1)}
\end{align*}

Again, we have from Lemma 3.2 that
\begin{align*}
16&\int_s^\infty \kappa^4 |w(0, \kappa)|^2{\rm
d}\kappa+ 16\int_s^\infty \kappa^4 |w(1, \kappa)|^2{\rm d}\kappa\\
&=\int_s^\infty \Bigl|\int_0^1 e^{2{\rm i}\kappa x}(g_1(x)-g_2(x)){\rm
d}x\Bigr|^2 {\rm d}\kappa +\int_s^\infty \Bigl|\int_0^1 e^{-2{\rm i}\kappa
x}(g_1(x)-g_2(x)){\rm d}x\Bigr|^2 {\rm d}\kappa.
\end{align*}
Similarly, we have
 \[
  \int_0^1 e^{\pm 2{\rm i}\kappa x}(g_1(x)-g_2(x)){\rm d}x=\frac{1}{(\pm 2{\rm
i}\kappa)^n}\int_0^1 e^{\pm 2{\rm i}\kappa x}(g_1^{(n)}(x)-g_2^{(n)}(x)){\rm
d}x,
 \]
which gives
\[
 \Bigl|\int_0^1 e^{\pm 2{\rm i}\kappa x}(g_1(x)-g_2(x)){\rm d}x\Bigr|^2\leq
(2\kappa)^{-2n}\|g^{(n)}_1-g_2^{(n)}\|^2_{H^n(0, 1)}.
\]
Therefore, we get
\begin{align*}
 \int_s^\infty \Bigl|\int_0^1 e^{\pm 2{\rm i}\kappa x}(g_1(x)-g_2(x)){\rm
d}x\Bigr|^2 {\rm d}\kappa&\leq \|g^{(n)}_1-g_2^{(n)}\|^2_{H^n(0, 1)}
\int_s^\infty (2\kappa)^{-2n}{\rm d}\kappa\\
&=\frac{s^{-(2n-1)}}{(2n-1)4^n}\|g^{(n)}_1-g_2^{(n)}\|^2_{H^n(0, 1)},
\end{align*}
which completes the proof.
\end{proof}

The following lemma is proved in \cite{CIL}.

\begin{lemm}
 Denote $S=\{z=x+{\rm i}y\in\mathbb{C}: -\frac{\pi}{4}<{\rm arg}
z<\frac{\pi}{4}\}$. Let $J(z)$ be analytic in $S$ and continuous in $\bar{S}$
satisfying
\[
 \begin{cases}
  |J(z)|\leq\epsilon, & z\in (0, ~ L],\\
  |J(z)|\leq V, & z\in S,\\
  |J(0)|=0.
 \end{cases}
\]
Then there exits a function $\mu(z)$ satisfying
\[
 \begin{cases}
  \mu(z)\geq\frac{1}{2},  & z\in(L, ~ 2^{\frac{1}{4}}L),\\
  \mu(z)\geq \frac{1}{\pi}((\frac{z}{L})^4-1)^{-\frac{1}{2}}, & z\in
(2^{\frac{1}{4}}L, ~ \infty)
 \end{cases}
\]
such that
\[
|J(z)|\leq V\epsilon^{\mu(z)}, \quad\forall\, z\in (L, ~ \infty).
\]
\end{lemm}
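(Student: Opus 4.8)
The plan is to realize $\mu$ as a harmonic measure and to bound $\log|J|$ by the two-constants theorem. Since harmonic measure is invariant under the similarity $z\mapsto z/L$, everything will depend only on $z/L$. The essential observation is that $\log|J|$ is subharmonic in $S$, hence also in the slit domain $\Omega=S\setminus(0,L]$ obtained by cutting $S$ along the segment carrying the sharp bound $\epsilon$. The boundary $\partial\Omega$ then splits into the two sides of the slit $E=(0,L]$, where $|J|\le\epsilon$, and the two rays $\arg z=\pm\pi/4$, where only $|J|\le V$ is known; crucially, the target points $z\in(L,\infty)$ now lie in the interior of $\Omega$. This is exactly why cutting is necessary: on the sector itself the data on $(0,L]$ sit in the interior and cannot be fed into a maximum-principle estimate. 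The hypothesis $J(0)=0$ is what lets me dispose of the single awkward point where the slit tip meets the vertex of the sector, since there $\log|J|=-\infty$, so no boundary mass concentrates and the maximum principle applies cleanly.

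With this setup the two-constants theorem gives, for $z\in\Omega$,
\[
\log|J(z)| \le \mu(z)\log\epsilon + (1-\mu(z))\log V,\qquad \mu(z):=\omega(z,E,\Omega),
\]
where $\omega(\cdot,E,\Omega)$ denotes the harmonic measure of the slit $E$. Exponentiating yields $|J(z)|\le V^{1-\mu(z)}\epsilon^{\mu(z)}\le V\epsilon^{\mu(z)}$, the last inequality using $V\ge1$ (valid in the intended application, where $V$ is a norm bound). Thus the whole statement reduces to computing $\mu$ and bounding it below.

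I would compute $\mu$ by uniformizing $\Omega$. The map $w=(z/L)^4$ sends $\Omega$ onto the slit plane $D=\mathbb{C}\setminus(-\infty,1]$, carrying $E$ to $(0,1]$, the two rays to $(-\infty,0]$, and a real point $z>L$ to $w_0=(z/L)^4>1$. Composing with $\eta=\sqrt{1-w}$, the branch mapping $D$ onto the upper half-plane $\mathbb{H}$, sends $E$ to the interval $(-1,1)$, the $V$-part of the boundary to $\{|\eta|>1\}$, and the target to $\eta_0=i\sqrt{(z/L)^4-1}$ on the imaginary axis. Since the harmonic measure of a boundary interval in $\mathbb{H}$ equals $1/\pi$ times the angle it subtends, the subtended-angle computation at $\eta_0$ gives
\[
\mu(z)=\frac{2}{\pi}\arctan\frac{1}{\sqrt{(z/L)^4-1}} .
\]

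Finally I would read off the two stated bounds from elementary properties of $\arctan$. Writing $a=\sqrt{(z/L)^4-1}$: for $z\in(L,2^{1/4}L)$ one has $a\in(0,1)$, so $1/a>1$ and $\arctan(1/a)>\pi/4$, giving $\mu(z)\ge\tfrac12$; for $z>2^{1/4}L$ one has $a\ge1$, and applying $\arctan x\ge x/2$ on $(0,1]$ with $x=1/a$ gives $\mu(z)\ge\frac{1}{\pi a}=\frac1\pi\bigl((z/L)^4-1\bigr)^{-1/2}$. I expect the main obstacle to be the first half of the argument: organizing the slit-domain maximum principle correctly (the interior-data obstruction, and the vertex where $J(0)=0$ is invoked) and tracking the branch of $\sqrt{1-w}$ through the composition so that the boundary correspondence, and hence the subtended-angle formula, comes out right. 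Once $\mu$ is in closed form, the remaining estimates are routine.
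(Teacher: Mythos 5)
The paper offers no proof of this lemma at all---it is quoted verbatim from \cite{CIL}---and your harmonic-measure argument is precisely the standard proof of such statements: cut the sector along the segment $(0,L]$ (since the $\epsilon$-data sit in the interior of $S$), apply the two-constants theorem for the bounded subharmonic function $\log|J|$ in the slit domain, and compute $\mu(z)=\omega(z,E,\Omega)$ explicitly through $w=(z/L)^4$ and $\eta=\sqrt{1-w}$; your boundary correspondence, the closed form $\mu(z)=\frac{2}{\pi}\arctan\bigl(((z/L)^4-1)^{-1/2}\bigr)$, and the two lower bounds (via $\arctan(1/a)>\pi/4$ for $a<1$ and $\arctan x\geq x/2$ on $(0,1]$) all check out. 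Two minor remarks: the final step $V^{1-\mu}\epsilon^{\mu}\leq V\epsilon^{\mu}$ does require $V\geq 1$, which you correctly flag and which holds in the paper's application (there $V=CM^{2}e^{3s}$ with $M>1$); and the hypothesis $|J(0)|=0$ is not actually load-bearing in your argument, since the vertex, the slit tip, and the point at infinity are single prime ends of harmonic measure zero, so the global bound $\log|J|\leq\log V$ already lets the generalized maximum principle absorb them.
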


\begin{lemm}
 Let $f_j, g_j\in\mathcal{C}_M$. Then there exists a function
$\mu(z)$ satisfying
\begin{equation}\label{mu}
 \begin{cases}
  \mu(s)\geq\frac{1}{2}, \quad & s\in(K, ~ 2^{\frac{1}{4}}K),\\
  \mu(s)\geq \frac{1}{\pi}((\frac{s}{K})^4-1)^{-\frac{1}{2}}, \quad & s\in
(2^{\frac{1}{4}}K, ~\infty),
 \end{cases}
\end{equation}
such that
\[
 |I_1(s)|\leq CM^2 e^{3s}\epsilon_1^{2\mu(s)},\quad |I_2(s)|\leq CM^2
e^{5s}\epsilon_2^{2\mu(s)},\quad\forall s\in (K, ~\infty).
\]
\end{lemm}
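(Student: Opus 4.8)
The plan is to apply the sectorial estimate of Lemma 3.5 to exponentially damped versions of $I_1$ and $I_2$. The key preliminary observation is that $\epsilon_1^2$ and $\epsilon_2^2$ are exactly the real values $I_1(K)$ and $I_2(K)$. Indeed, combining the definitions \eqref{i1}--\eqref{i2} with the identities of Lemma 3.2, the integrand of $I_1$ at a real point $\kappa$ equals $4\kappa^2(|v(0,\kappa)|^2+|v(1,\kappa)|^2)$, so that $I_1(K)=\epsilon_1^2$, and likewise $I_2(K)=\epsilon_2^2$. Thus the available data on $(0,K)$ becomes a smallness bound on the real segment $(0,K]$, which is precisely the hypothesis needed to feed Lemma 3.5 with $L=K$.

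First I would set $J_1(z)=I_1(z)e^{-3z}$, which is entire since $I_1$ is, and verify the three hypotheses of Lemma 3.5. On the segment $(0,K]$ the function $I_1$ is nondecreasing (it is the integral over $(0,s)$ of a nonnegative integrand), so $I_1(z)\le I_1(K)=\epsilon_1^2$ there, and since $e^{-3z}\le 1$ for $z\ge 0$ we get $|J_1(z)|\le\epsilon_1^2$, which plays the role of $\epsilon$. At the origin $I_1(0)=0$, hence $|J_1(0)|=0$. For the uniform bound on $S$, write $z=s_1+{\rm i}s_2$; inside $S$ one has $|s_2|\le s_1$ and $|z|\le\sqrt{2}\,s_1$. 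Lemma 3.3 together with $\|f_1-f_2\|^2_{L^2(0,1)}\le 4M^2$ (from $f_j\in\mathcal{C}_M$) yields $|I_1(z)|\le 8M^2|z|e^{2|s_2|}$, so
\[
 |J_1(z)|\le 8M^2|z|e^{2|s_2|-3s_1}\le 8M^2|z|e^{-s_1}\le 8\sqrt{2}\,M^2 s_1 e^{-s_1}\le CM^2,
\]
which supplies the constant $V=CM^2$. The choice of the exponent $3$ is forced here: one needs $|z|e^{2|s_2|-\beta s_1}$ bounded on $\overline{S}$, and since $e^{2|s_2|}\le e^{2s_1}$ this requires $\beta>2$, with $\beta=3$ the smallest convenient integer.

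Applying Lemma 3.5 to $J_1$ with $L=K$, $\epsilon=\epsilon_1^2$ and $V=CM^2$ produces $\mu$ satisfying \eqref{mu} and $|J_1(z)|\le CM^2(\epsilon_1^2)^{\mu(z)}$ for $z\in(K,\infty)$; evaluating at real $s>K$ and multiplying by $e^{3s}$ gives $|I_1(s)|\le CM^2e^{3s}\epsilon_1^{2\mu(s)}$. The estimate for $I_2$ is entirely parallel with the damping factor $e^{-5z}$: Lemma 3.3 now gives $|I_2(z)|\le 8M^2|z|e^{4|s_2|}$, so $\beta=5$ is the exponent needed to make $|z|e^{4|s_2|-\beta s_1}$ bounded on $\overline{S}$ (as $e^{4|s_2|}\le e^{4s_1}$ forces $\beta>4$), and Lemma 3.5 then yields $|I_2(s)|\le CM^2e^{5s}\epsilon_2^{2\mu(s)}$. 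Since the $\mu$ furnished by Lemma 3.5 depends only on $L=K$ through the lower bounds in \eqref{mu}, the same $\mu$ serves both estimates. I expect the only delicate point to be the passage from the $e^{2|s_2|}$ (resp.\ $e^{4|s_2|}$) growth in Lemma 3.3 to a genuine uniform bound on $S$: everything hinges on choosing the damping exponents $3$ and $5$ large enough that both the exponential and the linear factor $|z|$ are absorbed inside the sector, yet small enough that $e^{-3z},e^{-5z}\le 1$ preserves the smallness bound on the positive real axis.
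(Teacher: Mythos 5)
Your proposal is correct and follows essentially the same route as the paper's proof: damp $I_1$ and $I_2$ by $e^{-3s}$ and $e^{-5s}$ respectively, bound the damped functions by $CM^2$ on the sector $S$ via Lemma 3.3, observe via Lemma 3.2 that they are bounded by $\epsilon_1^2$, $\epsilon_2^2$ on $(0,K]$, and apply Lemma 3.5 with $L=K$. Your write-up in fact fills in details the paper leaves implicit (the vanishing at the origin, the absorption of $|z|e^{2|s_2|-3s_1}$ inside the sector using $|s_2|\leq s_1$, and the identification $I_1(K)=\epsilon_1^2$), so it is, if anything, more complete than the original.
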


\begin{proof}
We only show the proof of the estimate for $I_1(s)$ since the proof is the same
for $I_2(s)$. It follows from Lemma 3.3 that
\[
 |I_1(s)e^{-3s}|\leq CM^2,\quad\forall s\in S.
\]
Recalling \eqref{e1}, \eqref{i1}, and Lemma 3.2, we have
\[
 |I_1(s)e^{-3s}|\leq\epsilon_1^2,\quad s\in [0, ~K].
\]
A direct application of Lemma 3.5 shows that there exists a function $\mu(s)$
satisfying \eqref{mu} such that
\[
 |I_1(s)e^{-3s}|\leq CM^2\epsilon_1^{2\mu},\quad\forall s\in (K, ~\infty),
\]
which completes the proof.
\end{proof}

Now we show the proof of Theorem 2.1.

\begin{proof}
It suffices to show the estimate \eqref{cfe} since the proof is similar for
the estimate \eqref{cge}. We can assume that $\epsilon_1<e^{-1}$, otherwise the estimate is
obvious. Let
\[
s=\begin{cases}
  \frac{1}{(3\pi)^{\frac{1}{3}}}K^{\frac{2}{3}}|\ln\epsilon_1|^{\frac{1}{4}}, &
2^{\frac{1}{4}} (3\pi)^{\frac{1}{3}}K^{\frac{1}{3}}<|\ln\epsilon_1|^{\frac{1}{4}},\\
K, &|\ln\epsilon_1|\leq 2^{\frac{1}{4}}(3\pi)^{\frac{1}{3}}K^{\frac{1}{3}}.
 \end{cases}
\]
If $2^{\frac{1}{4}}(3\pi)^{\frac{1}{3}}K^{\frac{1}{3}}<|\ln\epsilon_1|^{\frac{1}{4}}$, then we have
\begin{align*}
 |I_1(s)|&\leq CM^2 e^{3s}
e^{-\frac{2|\ln\epsilon_1|}{\pi}((\frac{s}{K})^4-1)^{-\frac{1}{2}}}\leq CM^2
e^{\frac{3}{(3\pi)^{\frac{1}{3}}}K^{\frac{2}{3}}|\ln\epsilon_1|^{\frac{1}{4}}-\frac{2|\ln\epsilon_1|}{\pi}
(\frac{K}{s})^2}\\
&=CM^2
e^{-2\left(\frac{9}{\pi}\right)^{\frac{1}{3}}K^{\frac{2}{3}}|\ln\epsilon_1|^{\frac{1}{2}}\left(1-\frac{1}{2}
|\ln\epsilon_1|^{-\frac{1}{4}}\right)}.
\end{align*}
Noting $\frac{1}{2} |\ln\epsilon_1|^{-\frac{1}{4}}<\frac{1}{2}$, $\left(\frac{9}{\pi}\right)^{\frac{1}{3}}>1$  we have
\begin{align*}
 |I_1(s)|&
\leq CM^2
e^{-K^{\frac{2}{3}}|\ln\epsilon_1|^{\frac{1}{2}}}.
\end{align*}
Using
the elementary inequality
\[
 e^{-x}\leq \frac{(6n-3)!}{x^{3(2n-1)}}, \quad x>0,
\]
we get
\[
 |I_1(s)|\leq\frac{CM^2}{\left(\frac{K^2|\ln\epsilon_1|^{\frac{3}{2}}}{(6n-3)^3}\right)^{2n-1}}.
\]
If $|\ln\epsilon_1|\leq 2^{\frac{1}{4}}(3\pi)^{\frac{1}{3}}K^{\frac{1}{3}}$, then $s=K$. We have
from \eqref{e1}, \eqref{i1}, and Lemma 3.2 that
\[
 |I_1(s)|\leq \epsilon_1^2.
\]
Hence we obtain from Lemma 3.4 that
\begin{align*}
 &4\int_0^\infty \kappa^2 \left(|v(0, \kappa)|^2+|v(1, \kappa)|^2\right) {\rm
d}\kappa\\
&\leq
I_1(s)+\frac{CM^2}{\left(\frac{K^2|\ln\epsilon_1|^{\frac{3}{2}}}{(6n-3)^3}\right)^{2n-1}}+\frac{
\|f_1-f_2\|^2_{H^n(0,
1)}}{\left(2^{-\frac{1}{4}}(3\pi)^{-\frac{1}{3}}K^{\frac{2}{3}}|\ln\epsilon_1|^{\frac{1}{4}}\right)^{2n-1}}.
\end{align*}
By Lemma 3.1, we have
\[
 \|f_1-f_2\|^2_{L^2(0, 1)}\leq C \left(\epsilon_1^2
+\frac{M^2}{\left(\frac{K^2|\ln\epsilon_1|^{\frac{3}{2}}}{(6n-3)^3}\right)^{2n-1}}+\frac{M^2}{\left(\frac{K^{\frac{2}
{3}}|\ln\epsilon_1|^{\frac{1}{4}}}{(6n-3)^3}\right)^{2n-1}}\right).
\]
Since $K^{\frac{2}{3}}|\ln\epsilon_1|^{\frac{1}{4}}\leq K^2
|\ln\epsilon_1|^{\frac{3}{2}}$ when $K>1$ and $|\ln\epsilon_1|>1$, we obtain
the stability estimate.
\end{proof}

\section{Proof of Theorem \ref{mr2}}

\begin{lemm}
 Let $f_j\in L^2(0, 1), j=1, 2$ be real functions. We have for all
$\kappa\in (0,\,\infty)$ that
 \[
  2\kappa{\bf E}{\rm Re}v(0, \kappa)=\int_0^1 \sin(\kappa x)(f_1(x)-f_2(x)){\rm
d}x.
 \]
\end{lemm}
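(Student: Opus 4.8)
The plan is to reduce the claim to the complex boundary identity already isolated in the proof of Lemma 3.2, namely
\[
 2{\rm i}\kappa v(0, \kappa)=\int_0^1 e^{{\rm i}\kappa x}(f_1(x)-f_2(x)){\rm d}x,
\]
and then simply separate real and imaginary parts. First I would set $h=f_1-f_2$ and observe that, since $f_1$ and $f_2$ are real-valued, $h$ is real; hence by Euler's formula the integral splits as $\int_0^1 \cos(\kappa x)h(x){\rm d}x+{\rm i}\int_0^1\sin(\kappa x)h(x){\rm d}x$, where both integrals are real numbers.

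The key step is to track the prefactor $\frac{1}{2{\rm i}\kappa}=-\frac{{\rm i}}{2\kappa}$, which interchanges the roles of the real and imaginary parts. Writing
\[
 v(0,\kappa)=-\frac{{\rm i}}{2\kappa}\int_0^1\cos(\kappa x)h(x){\rm d}x+\frac{1}{2\kappa}\int_0^1\sin(\kappa x)h(x){\rm d}x,
\]
I see that the cosine contribution is purely imaginary while the sine contribution is purely real, so taking the real part kills the cosine term and leaves ${\rm Re}\,v(0,\kappa)=\frac{1}{2\kappa}\int_0^1\sin(\kappa x)h(x){\rm d}x$. Multiplying by $2\kappa$ yields the asserted formula. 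Finally, since $v$ is already defined through an expectation and is therefore deterministic, the outer ${\bf E}$ acts as the identity, so ${\bf E}\,{\rm Re}\,v(0,\kappa)={\rm Re}\,v(0,\kappa)$ and the statement as written follows.

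There is no serious obstacle here; the computation is elementary once the Lemma 3.2 identity is in hand. The only point demanding care is the bookkeeping of real versus imaginary parts, in particular remembering that multiplication by $-{\rm i}$ sends the real cosine integral onto the imaginary axis and the real sine integral back onto the real axis; a sign slip there would put the wrong trigonometric function in the conclusion. Alternatively, one could bypass the prefactor entirely by returning to \eqref{sol} and taking the real part of the expectation of $2{\rm i}\kappa u_j(0,\kappa)$ directly, but routing through the representation of $v(0,\kappa)$ in Lemma 3.2 is the shortest path.
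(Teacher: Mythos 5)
Your proof is correct and takes essentially the same route as the paper: both arguments rest on the identity $2{\rm i}\kappa v(0,\kappa)=\int_0^1 e^{{\rm i}\kappa x}(f_1(x)-f_2(x))\,{\rm d}x$ (the paper re-derives it from \eqref{sol} by taking expectations, killing the It\^{o} integral, while you cite the version already isolated in the proof of Lemma 3.2) and then use the realness of $f_1-f_2$ to identify $2\kappa\,{\rm Re}\,v(0,\kappa)$ with the sine integral, with your prefactor bookkeeping $\frac{1}{2{\rm i}\kappa}=-\frac{{\rm i}}{2\kappa}$ matching the paper's separation of real and imaginary parts. Your observation that the outer ${\bf E}$ in the statement is redundant, since $v$ is already deterministic, is also accurate.
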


\begin{proof}
 It can be easily obtained from \eqref{sol} that
 \[
  2{\rm i}\kappa u_j(0, \kappa)=\int_0^1 e^{{\rm i}\kappa x} f_j(x){\rm
d}x+\int_0^1 e^{{\rm i}\kappa x}\sigma_j(x){\rm d}W_x.
 \]
Taking the expectation of the above equation gives
\[
 2{\rm i}\kappa {\bf E}u_j(0, \kappa)=\int_0^1 e^{{\rm i}\kappa x}f_j(x){\rm
d}x.
\]
Noting that $f_j, j=1, 2$ are real functions, we have
\[
 2{\kappa}{\bf E}{\rm Re}u_j(0, \kappa)=\int_0^1 \sin(\kappa x)f_j(x){\rm d}x,
\]
which completes the proof.
\end{proof}

\begin{lemm}
 Let $\|f_1-f_2\|_{L^2(0, 1)}\leq M$. There exists a function $\mu(\kappa)$
satisfying
 \begin{equation}\label{c1}
\begin{cases}
 \mu(\kappa)\geq\frac{1}{2}, &\kappa\in (1, ~ 2^{\frac{1}{4}}),\\
\mu(\kappa)\geq\frac{1}{\pi}(\kappa^4-1)^{-\frac{1}{2}}, &\kappa\in
(2^{\frac{1}{4}}, ~\infty),
 \end{cases}
\end{equation}
such that
\[
 \Bigl|\int_0^1 \sin(\kappa x)(f_1(x)-f_2(x)){\rm d}x \Bigr|^2\leq C M^2
e^{4\kappa}\epsilon_4^{2\mu(\kappa)},\quad\forall\kappa\in (1, ~\infty).
\]
\end{lemm}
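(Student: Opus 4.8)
The plan is to mimic the proof of Lemma 3.6 and reduce the estimate to a single application of the analytic-continuation Lemma 3.5. First I would introduce the entire function
\[
 J(z)=e^{-4z}\Bigl(\int_0^1 \sin(zx)(f_1(x)-f_2(x)){\rm d}x\Bigr)^2, \qquad z=z_1+{\rm i}z_2,
\]
which is analytic in the sector $S=\{-\tfrac{\pi}{4}<\arg z<\tfrac{\pi}{4}\}$ and continuous on $\bar S$, and observe that $J(0)=0$ because $\sin 0=0$. On the real axis the quantity to be bounded is exactly $e^{4\kappa}J(\kappa)$, so the whole statement reduces to controlling $|J(z)|$ on $(0,1]$ and on $S$, and then invoking Lemma 3.5 with $L=1$.

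For the sector bound I would use the elementary estimate $|\sin(zx)|\le e^{|z_2|}$ valid for $x\in(0,1)$ (which follows from $|\sin(zx)|^2=\sin^2(z_1x)+\sinh^2(z_2x)\le\cosh^2(z_2x)$), together with the Cauchy--Schwarz inequality $\|f_1-f_2\|_{L^1(0,1)}\le\|f_1-f_2\|_{L^2(0,1)}\le M$, to obtain
\[
 \Bigl|\int_0^1 \sin(zx)(f_1-f_2){\rm d}x\Bigr|^2\le M^2 e^{2|z_2|}.
\]
Since $|z_2|<z_1$ throughout $S$, this yields $|J(z)|\le M^2 e^{-4z_1+2|z_2|}\le M^2 e^{-2z_1}\le M^2$, so we may take $V=M^2$. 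For the bound on $(0,1]$ I would invoke Lemma 4.1, which identifies $\int_0^1\sin(\kappa x)(f_1-f_2){\rm d}x$ with $2\kappa\,{\rm Re}\,v(0,\kappa)$; by the very definition of $\epsilon_4=\sup_{\kappa\in(0,1)}2\kappa|{\rm Re}\,v(0,\kappa)|$ this gives $|J(\kappa)|\le e^{-4\kappa}\epsilon_4^2\le\epsilon_4^2$ for $\kappa\in(0,1]$, so $\epsilon=\epsilon_4^2$.

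With the three hypotheses $|J(z)|\le M^2$ on $S$, $|J(z)|\le\epsilon_4^2$ on $(0,1]$, and $J(0)=0$ in hand, Lemma 3.5 (with $L=1$) produces a function $\mu$ satisfying exactly the two cases in \eqref{c1} and such that $|J(z)|\le M^2(\epsilon_4^2)^{\mu(z)}=M^2\epsilon_4^{2\mu(z)}$ for all $z\in(1,\infty)$. Restricting to the real axis and multiplying through by $e^{4\kappa}$ then gives the claimed inequality, with $C=1$. I expect no serious obstacle here, since this is a routine adaptation of Lemma 3.6; the only points requiring care are the verification that $e^{-4z_1+2|z_2|}\le1$ on the sector, for which the strict inequality $|z_2|<z_1$ defining $S$ is essential, and the bookkeeping that the standing hypothesis $\epsilon_4<1$ is precisely what makes $\epsilon=\epsilon_4^2<1$, so that the factor $M^2\epsilon_4^{2\mu}$ genuinely decays as $\mu$ increases.
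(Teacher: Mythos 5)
Your proof is correct and takes essentially the same route as the paper's: the paper applies Lemma 3.5 (with $L=1$) to the unsquared function $e^{-2z}\int_0^1\sin(zx)(f_1(x)-f_2(x))\,{\rm d}x$ with $\epsilon=\epsilon_4$, $V=CM$, and squares at the end, while you apply it directly to the squared function $e^{-4z}\bigl(\int_0^1\sin(zx)(f_1(x)-f_2(x))\,{\rm d}x\bigr)^2$ with $\epsilon=\epsilon_4^2$, $V=M^2$. This is a purely cosmetic difference producing the identical bound $M^2e^{4\kappa}\epsilon_4^{2\mu(\kappa)}$, and all your individual steps (the sector bound via $|\sin(zx)|\le e^{|z_2|}$ and $|z_2|\le z_1$ on $S$, the small-interval bound via Lemma 4.1 and the definition of $\epsilon_4$, and $J(0)=0$) match the paper's.
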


\begin{proof}
 Let $\kappa=\kappa_1+{\rm i}\kappa_2, \kappa_1, \kappa_2\in\mathbb{R}$. It is
easy to show that
\[
 \Bigl|\int_0^1 \sin(\kappa x)(f_1(x)-f_2(x)){\rm d}x \Bigr|^2\leq
e^{|\kappa_2|}\|f_1-f_2\|_{L^2(0, 1)}.
\]
Noting $|\kappa_2|\leq\kappa_1$ for $\kappa\in S$, we have
\begin{align*}
 \bigl|e^{-2\kappa}\bigr| &\Bigl|\int_0^1 \sin(\kappa x)(f_1(x)-f_2(x)){\rm d}x
\Bigr|^2 \leq \bigl|e^{-2\kappa}\bigr| e^{|\kappa_2|} \|f_1-f_2\|_{L^2(0, 1)}\\
&\leq e^{-\kappa_1}\|f_1-f_2\|_{L^2(0, 1)}\leq M.
\end{align*}
It follows from Lemma 4.1 that
\[
 \bigl|e^{-2\kappa}\bigr| \left|\int_0^1 \sin(\kappa x)(f_1(x)-f_2(x)){\rm
d}x\right|\leq\epsilon_4,\quad \kappa\in (0, 1].
\]
We conclude from Lemma 3.5 that there exists a function $\mu$ satisfying
\eqref{c1} such that
\[
 \bigl|e^{-2\kappa}\bigr| \Bigl|\int_0^1 \sin(\kappa x)(f_1(x)-f_2(x)){\rm
d}x\leq C M \epsilon_4^{\mu(\kappa)},\quad\kappa\in (1,~\infty),
\]
which completes the proof.
\end{proof}

\begin{lemm}
 Let $f_j\in H^n(0, 1), j=1, 2$. It holds that
 \[
  \sum_{j=T}^\infty (2j\pi)^2 |{\bf E}{\rm Re}v(0, j\pi)|^2\leq
\frac{1}{T^{2n-1}}\|f_1-f_2\|^2_{H^n(0, 1)}.
 \]
\end{lemm}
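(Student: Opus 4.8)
The plan is to reduce the claimed discrete tail bound to a per-term decay estimate obtained by repeated integration by parts, and then to sum. Writing $\phi=f_1-f_2$, Lemma 4.1 evaluated at $\kappa=j\pi$ gives
\[
(2j\pi)^2\,|{\bf E}{\rm Re}\,v(0,j\pi)|^2=\Bigl|\int_0^1\sin(j\pi x)\phi(x){\rm d}x\Bigr|^2 ,
\]
so the left-hand side of the lemma is exactly $\sum_{j=T}^\infty|c_j|^2$ with $c_j=\int_0^1\sin(j\pi x)\phi(x){\rm d}x$. This is (half of) a sine--Fourier energy of $\phi$, so a Parseval/Bessel philosophy should control it, while the extra decay in $j$ produced by differentiating is what yields the factor $T^{-(2n-1)}$. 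The argument is the discrete counterpart of Lemma 3.4, with the integral $\int_s^\infty\kappa^{-2n}{\rm d}\kappa$ there replaced by the series $\sum_{j\ge T}(j\pi)^{-2n}$ here.

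First I would integrate by parts $n$ times in $c_j$. Since the source functions have support compactly contained in $(0,1)$ (inherited from the class $\mathcal{R}_M$), the function $\phi$ together with all its derivatives through order $n-1$ vanishes near $0$ and $1$, so every boundary term is zero. This gives, up to a sign,
\[
c_j=\frac{1}{(j\pi)^{n}}\int_0^1\Psi_n(j\pi x)\,\phi^{(n)}(x){\rm d}x ,
\]
where $\Psi_n$ is $\sin$ or $\cos$ according to the parity of $n$; in either case $|\Psi_n|\le 1$. Applying the Cauchy--Schwarz inequality together with $\int_0^1\Psi_n(j\pi x)^2{\rm d}x\le 1$ then yields the clean per-term estimate
\[
|c_j|^2\le\frac{1}{(j\pi)^{2n}}\,\|\phi^{(n)}\|^2_{L^2(0,1)}\le\frac{1}{(j\pi)^{2n}}\,\|f_1-f_2\|^2_{H^n(0,1)} .
\]

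It then remains to sum over $j\ge T$ and to check that the tail series is bounded by $T^{-(2n-1)}$ with constant $1$. Comparing with the monotone integral gives, for $T\ge 1$ and $n\ge 1$,
\[
\sum_{j=T}^\infty j^{-2n}\le T^{-2n}+\int_T^\infty x^{-2n}{\rm d}x=T^{-(2n-1)}\Bigl(\tfrac{1}{T}+\tfrac{1}{2n-1}\Bigr)\le 2\,T^{-(2n-1)} .
\]
Combining this with the per-term bound produces the prefactor $2/\pi^{2n}$, which is smaller than $1$ because $\pi^{2n}\ge\pi^2>2$, so the asserted inequality follows with constant exactly $1$. (Alternatively, one may bound $\sum_{j\ge T}|c_j|^2\le(T\pi)^{-2n}\sum_{j\ge1}|b_j|^2$ with $b_j=\int_0^1\Psi_n(j\pi x)\phi^{(n)}{\rm d}x$ and invoke Bessel's inequality, $\sum_{j\ge1}|b_j|^2\le\tfrac12\|\phi^{(n)}\|^2_{L^2}$; this even delivers the sharper decay $T^{-2n}$, which a fortiori implies the stated $T^{-(2n-1)}$.)

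The only genuinely delicate points are the vanishing of all boundary terms in the $n$-fold integration by parts---this is precisely where the compact support ${\rm supp}\,f_j\subseteq(0,1)$, rather than merely $[0,1]$, is used---and the bookkeeping that makes the final constant equal to $1$ rather than merely $O(1)$. The latter works out only because the factor $\pi^{2n}$ gained from the frequencies $j\pi$ dominates the harmless $2/(2n-1)$-type loss incurred in the integral comparison.
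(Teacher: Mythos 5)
Your proof is correct and follows essentially the same route as the paper: reduce to $\bigl|\int_0^1\sin(j\pi x)(f_1-f_2)\,{\rm d}x\bigr|^2$ via Lemma 4.1, integrate by parts $n$ times using the compact support in $(0,1)$ to kill boundary terms, apply Cauchy--Schwarz, and compare the tail sum $\sum_{j\ge T}(j\pi)^{-2n}$ with an integral, absorbing the resulting constant into the factor $\pi^{-2n}<1$. In fact your bookkeeping is marginally more careful than the paper's, whose displayed comparison $\sum_{j=T}^\infty(j\pi)^{-2n}\le\pi^{-2n}\int_0^\infty(T+x)^{-2n}{\rm d}x$ silently drops the $j=T$ term (harmless only because of the $\pi^{2n}$ slack, which you make explicit), and your optional Bessel-inequality variant even yields the sharper decay $T^{-2n}$.
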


\begin{proof}
 It follows from Lemma 4.1 that
 \[
  \sum_{j=T}^\infty 4(j\pi)^2 |{\bf E}{\rm Re}v(0, j\pi)|^2=\sum_{j=T}^\infty
\Bigl|\int_0^1 \sin(j\pi x)(f_1(x)-f_2(x)){\rm
d}x\Bigr|^2.
 \]
Noting that $f_j$ has a compact support in $(0, 1)$, we have from the
integration by parts that
\begin{align*}
 \Bigl|\int_0^1 \sin(j\pi x)(f_1(x)-f_2(x)){\rm
d}x\Bigr|^2&=\Bigl|\frac{1}{(j\pi)^n}\int_0^1 \sin(j\pi
x+n\pi/2)(f^{(n)}_1(x)-f^{(n)}_2(x)){\rm d}x\Bigr|^2\\
&\leq \frac{1}{(j\pi)^{2n}}\|f_1-f_2\|^2_{H^n(0, 1)}.
\end{align*}
Combining the above estimates yields
\begin{align*}
  \sum_{j=T}^\infty 4(j\pi)^2 |{\bf E}{\rm Re}v(0, j\pi)|^2 &\leq
 \left(\sum_{j=T}^\infty \frac{1}{(j\pi)^{2n}}\right)\|f_1-f_2\|^2_{H^n(0, 1)}\\
&\leq \frac{1}{\pi^{2n}}\left(\int_0^\infty \frac{1}{(T+x)^{2n}}{\rm d}x
\right)\|f_1-f_2\|^2_{H^n(0, 1)}\\
&=\frac{1}{(2n-1)\pi^{2n}}\frac{1}{T^{2n-1}}\|f_1-f_2\|^2_{H^n(0, 1)},
\end{align*}
which completes the proof.
\end{proof}

Now we show the proof of Theorem \ref{mr2}.

\begin{proof}
 We can assume that $\epsilon_3<1$, otherwise the estimate is obvious. Applying
Lemma 4.1 and the Parseval identity, we have
\begin{align*}
& \int_0^1 |f_1(x)-f_2(x)|^2{\rm d}x=\sum_{j=1}^\infty 4(j\pi)^2|{\bf E}{\rm
Re}v(0, j\pi)|^2\\
&=\sum_{j=1}^T 4(j\pi)^2|{\bf E}{\rm Re}v(0, j\pi)|^2
+\sum_{j=T+1}^\infty 4(j\pi)^2|{\bf E}{\rm Re}v(0, j\pi)|^2
\end{align*}
Let
\[
T=
 \begin{cases}
  [N^{\frac{3}{4}}|\ln\epsilon_4|^{\frac{1}{9}}], &
N^{\frac{3}{8}}<\frac{1}{2^{\frac{5}{6}}
\pi^{\frac{2}{3}}}|\ln\epsilon_4|^{\frac{1}{9}},\\
N,&N^{\frac{3}{8}}\geq \frac{1}{2^{\frac{5}{6}}
\pi^{\frac{2}{3}}}|\ln\epsilon_4|^{\frac{1}{9}}
 \end{cases}.
\]
Using Lemma 4.2 leads to
\begin{align*}
& \Bigl|\int_0^1\sin(\kappa x)(f_1(x)-f_2(x)) {\rm d}x\Bigr|^2\leq C M^2
e^{4\kappa}\epsilon_4^{2\mu(\kappa)}\leq CM^2
e^{4\kappa}e^{-2\mu(\kappa)|\ln\epsilon_4|}\\
&\leq CM^2
e^{4\kappa}e^{-\frac{2}{\pi}(\kappa^4-1)^{-\frac{1}{2}}|\ln\epsilon_4|}\leq
CM^2 e^{4\kappa-\frac{2}{\pi}\kappa^{-2}|\ln\epsilon_4|}\\
&\leq
CM^2 e^{-\frac{2}{\pi}\kappa^{-2}|\ln\epsilon_4|(1-2\pi\kappa^3|\ln\epsilon_4|^{
-1})},\quad \forall\,\kappa\in (2^{\frac{1}{4}},~\infty).
\end{align*}
Hence we have
\begin{equation}\label{c2}
 \Bigl|\int_0^1\sin(\kappa x)(f_1(x)-f_2(x)) {\rm d}x\Bigr|^2\leq CM^2
e^{-\frac{2}{\pi^3}T^{-2}|\ln\epsilon_4|(1-2\pi^4 T^3|\ln\epsilon_4|^{
-1})},\quad\forall~\kappa\in (2^{\frac{1}{4}},~T\pi].
\end{equation}

If $N^{\frac{3}{8}}<\frac{1}{2^{\frac{5}{6}}
\pi^{\frac{2}{3}}}|\ln\epsilon_4|^{\frac{1}{9}}$, then $2\pi^4
T^3|\ln\epsilon_4|^{-1}<\frac{1}{2}$ and
\begin{equation}\label{c3}
 e^{-\frac{2}{\pi^3}\frac{|\ln\epsilon_4|}{T^2}}\leq
e^{-\frac{2}{\pi^3}\frac{|\ln\epsilon_4|}{N^{\frac{3}{2}}|\ln\epsilon_4|^{\frac{
2}{9}}}}\leq
e^{-\frac{2}{\pi^3}\frac{|\ln\epsilon_4|^{\frac{7}{9}}}{N^\frac{3}{2}}}\leq
e^{-\frac{2}{\pi^3}\frac{2^5\pi^4
|\ln\epsilon_4|^{\frac{1}{9}} N^{\frac{9}{4}}}{N^{\frac{3}{2}}}}= e^{-64\pi
|\ln\epsilon_4|^{\frac{1}{9}}N^{\frac{3}{4}}}.
\end{equation}
Combining \eqref{c2} and \eqref{c3}, we obtain
\begin{align*}
  \Bigl|\int_0^1\sin(\kappa x)(f_1(x)-f_2(x)) {\rm d}x\Bigr|^2&\leq CM^2
e^{-\frac{2}{\pi^3}T^{-2}|\ln\epsilon_4|(1-2\pi^4 T^3|\ln\epsilon_4|^{
-1})}\\
&\leq CM^2 e^{-\frac{1}{\pi^3}T^{-2}|\ln\epsilon_4|}\leq CM^2 e^{-32\pi
|\ln\epsilon_4|^{\frac{1}{9}} N^{\frac{3}{4}}},\quad\forall\kappa\in
(2^{\frac{1}{4}},~T\pi].
\end{align*}
It is easy to note that
\[
 e^{-x}\leq \frac{(6n-3)!}{x^{3(2n-1)}}\quad\text{for} ~ x>0.
\]
We have
\[
  \Bigl|\int_0^1\sin(j\pi x)(f_1(x)-f_2(x)) {\rm d}x\Bigr|^2\leq
CM^2\frac{1}{\left(\frac{|\ln\epsilon_4|^{\frac{1}{3}}N^{\frac{9}{4}}}{(6n-3)^3}\right)^{2n-1}},
\quad j=1, \dots, T.
\]
Consequently, we obtain
\begin{align*}
& \sum_{j=1}^T  \Bigl|\int_0^1\sin(j\pi x)(f_1(x)-f_2(x)) {\rm d}x\Bigr|^2\leq
CM^2\frac{T}{\left(\frac{|\ln\epsilon_4|^{\frac{1}{3}}N^{\frac{9}{4}}}{(6n-3)^3}\right)^{2n-1}}\\
&\leq CM^2\frac{N^{\frac{3}{4}}|\ln\epsilon_4|^{\frac{1}{9}}}{\left(\frac{
|\ln\epsilon_4|^{\frac{1}{3}}N^{\frac{9}{4}}}{(6n-3)^3}\right)^{2n-1}}\leq CM^2
\frac{1}{\left(\frac{|\ln\epsilon_4|^{\frac{2}{9}}N^{\frac{3}{2}}}{(6n-3)^3}\right)^{2n-1}}\\
&\leq CM^2
\frac{1}{\left(\frac{|\ln\epsilon_4|^{\frac{1}{9}}N^{\frac{3}{2}}}{(6n-3)^3}\right)^{2n-1}}.
\end{align*}
Here we have noted that $|\ln\epsilon_4|>1$ when $N^{\frac{3}{8}}<\frac{1}{2^{\frac{5}{6}}
\pi^{\frac{2}{3}}}|\ln\epsilon_4|^{\frac{1}{9}}$.
If $N^{\frac{3}{8}}<\frac{1}{2^{\frac{5}{6}}
\pi^{\frac{2}{3}}}|\ln\epsilon_4|^{\frac{1}{9}}$, we also have
\begin{align*}
\frac{1}{\left(\bigl[|\ln\epsilon_4|^{\frac{1}{9}}N^{\frac{3}{4}}\bigr]+1\right)^{2n-1}}
\leq
\frac{1}{\left(|\ln\epsilon_4|^{\frac{1}{9}}N^{\frac{3}{4}}\right)^{2n-1}}.
\end{align*}

If $N^{\frac{3}{8}}\geq \frac{1}{2^{\frac{5}{6}}
\pi^{\frac{2}{3}}}|\ln\epsilon_4|^{\frac{1}{9}}$, then $T=N$. It follows from
Lemma 4.1 that
\[
 \sum_{j=1}^T  \Bigl|\int_0^1\sin(j\pi x)(f_1(x)-f_2(x)) {\rm
d}x\Bigr|^2=\epsilon_3^2.
\]
Combining the above estimates, we obtain
\begin{align*}
 \Bigl|\int_0^1\sin(\kappa x)(f_1(x)-f_2(x)) {\rm d}x\Bigr|^2\leq
C\epsilon_3^2+CM^2
\frac{1}{\left(\frac{|\ln\epsilon_4|^{\frac{1}{9}}N^{\frac{3}{2}}}{(6n-3)^3}\right)^{2n-1}}\\
+CM^2\frac{1}{\bigl(|\ln\epsilon_4|^{\frac{1}{9}}N^{\frac{3}{4}}
\bigr)^{2n-1}}+CM^2
\frac{(2^{\frac{5}{6}}\pi^{\frac{2}{3}})^{2n-1}}{\left(|\ln\epsilon_4|^{\frac{1}{9}}N^{\frac{5}{8}}
\right)^{2n-1}}.
\end{align*}
Noting that $N^{\frac{5}{8}}\le N^{\frac{3}{4}} \le N^{\frac{3}{2}}$ and $2^{\frac{5}{6}}\pi^{\frac{2}{3}}\le (6n-3)^3$, $\forall n\in\mathbb{N}$.
The proof is completed by combining the above estimates.
\end{proof}

\end{document}